\definecolor{LightGray}{HTML}{c7c7c7}
\definecolor{Blue}{HTML}{42bbed}
\let\originalleft\left
\let\originalright\right
\renewcommand{\left}{\mathopen{}\mathclose\bgroup\originalleft}
\renewcommand{\right}{\aftergroup\egroup\originalright}
\newcommand*{\texteq}[1]{\stackrel{\text{#1}}{=}}
\newcommand*{\defeq}{\texteq{def}}
\newcommand{\kboltzmann}{k_{\mathrm{B}}}
\newcommand{\dragcoef}{\zeta}
\DeclareMathOperator{\divergence}{div}
\DeclareMathOperator{\rot}{rot}
\DeclareMathOperator{\tr}{tr}
\DeclareMathOperator{\Cof}{Cof} 
\newcommand{\exponential}[1]{\ensuremath{{\mathrm e}^{#1}}}
\newcommand{\reference}{\mathrm{ref}}
\newcommand{\traceless}[1]{{#1}_{\delta}}
\newcommand{\diff}{\mathrm{d}}
\newcommand{\myvec}[1]{\ensuremath{\mathbf{#1}}}
\newcommand{\greekvec}[1]{\ensuremath{\boldsymbol{#1}}}
\newcommand{\tensorq}[1]{\ensuremath{\mathbf{#1}}}      
\newcommand{\tensorc}[1]{\ensuremath{\mathrm{#1}}}      
\newcommand{\transpose}[1]{#1^\top}
\newcommand{\transposei}[1]{#1^{-\top}}
\newcommand{\inverse}[1]{#1^{-1}}
\newcommand{\identity}{\ensuremath{\tensorq{I}}}
\newcommand{\cstress}{\tensorq{T}}
\newcommand{\ctensor}{\tensorq{C}}
\newcommand{\deformation}{\greekvec{\chi}}
\newcommand{\fgrad}{\tensorq{F}}
\newcommand{\fgradc}{\tensorc{F}}
\newcommand{\vecv}{\ensuremath{\myvec{v}}}
\newcommand{\gradsym}{\ensuremath{\tensorq{D}}}
\newcommand{\gradvl}{\ensuremath{\tensorq{L}}}
\newcommand{\vecvc}{\tensorc{v}}
\newcommand{\R}{\ensuremath{{\mathbb R}}}
\newcommand{\N}{\ensuremath{{\mathbb N}}}
\newcommand{\ienergy}{\ensuremath{e}} 
\newcommand{\ienergyref}{\ensuremath{e_{\reference}}}
\newcommand{\fenergy}{\ensuremath{\psi}} 
\newcommand{\entropy}{\ensuremath{\eta}} 
\newcommand{\temp}{\ensuremath{\theta}} 
\newcommand{\tempref}{\ensuremath{\theta_{\reference}}}
\newcommand{\pressure}{p}
\newcommand{\mns}{\ensuremath{m}} 
\newcommand{\cheatvolsol}{\ensuremath{c_{\mathrm{V}, \mathrm{s}}}}
\newcommand{\cheatvolref}{\ensuremath{c_{\mathrm{V}, \reference}}} 
\newcommand{\density}{\rho}
\newcommand{\densityref}{\rho_{\reference}}
\newcommand{\rhor}{\rho_{\mathrm{R}}}
\newcommand{\efluxc}{\myvec{j}_{e}} 
\newcommand{\entfluxc}{\myvec{j}_{\entropy}} 
\newcommand{\entprodc}{\xi} 
\newcommand{\pd}[2]{\ensuremath{\frac{\partial {#1}}{\partial {#2}}}}
\newcommand{\ppd}[2]{\ensuremath{\frac{\partial^2 {#1}}{\partial {#2^2}}}}
\newcommand{\dd}[2]{\ensuremath{\frac{\diff {#1}}{\diff {#2}}}}
\newcommand{\fid}[1]{\stackrel{\nabla}{#1}} 
\newcommand{\lfid}[1]{\stackrel{\Delta}{#1}}
\newcommand{\absnorm}[1]{\ensuremath{\left|#1\right|}}
\newcommand{\tensordot}[2]{\ensuremath{#1 \vdotdot #2}} 
\newcommand{\tensordot}[2]{\ensuremath{#1 : #2}} 
\newcommand{\vectordot}[2]{\ensuremath{#1 \cdot #2}}
\newcommand{\tensorf}[1]{{\mathfrak{#1}}}
\newcommand{\AAA}[1]{{\color{red}#1}}
\newcommand{\BBB}[1]{{\color{gray}#1}}
\newcommand{\detY}{Y} 
\numberwithin{equation}{section}
\begin{document}

\title{
  Non-isothermal viscoelastic 
  flows with 
  conservation laws and relaxation 
}

\author{S\'ebastien Boyaval \and Mark Dostal\'{\i}k}

\institute{
S\'ebastien Boyaval 
\at
Laboratoire d'hydraulique Saint-Venant (Ecole des Ponts -- EDF R\& D -- CEREMA)\\ \& Matherials (Inria Paris)\\
EDF'lab Chatou - 6 quai Wattier\\
Chatou Cedex 78401, France\\
\email{sebastien.boyaval@enpc.fr} 
\and
Mark Dostal\'{\i}k 
\at 
Mathematical Institute\\
Faculty of Mathematics and Physics\\
Charles University\\
Sokolovsk\'a 83\\
Prague 186\;75, Czechia\\
\email{dostalik@karlin.mff.cuni.cz}
}

\date{\today} 
 
\maketitle 

\begin{abstract}
    We propose a system of 
conservation laws with 
relaxation source terms (i.e. 
balance laws) for 
non-isothermal 
viscoelastic flows of 
Maxwell fluids. 

The system is an extension of the polyconvex elastodynamics of hyperelastic bodies using additional structure variables.
It is obtained by writing the Helmholtz free energy as the sum of a volumetric energy density 
(function of the determinant of the deformation gradient $\det\fgrad$ and the 
temperature $\temp$ like the standard perfect-gas law or Noble--Abel stiffened-gas law) 
plus a polyconvex 
strain energy density 
function of $\fgrad$, $\temp$
and of symmetric positive-definite structure tensors that relax at a characteristic time scale.
%
One 
feature of our model is that it unifies various 
ideal materials ranging from hyperelastic solids to 
perfect fluids, 
encompassing fluids 
with memory like Maxwell fluids.

We establish a strictly convex mathematical entropy 
to show that the 
system 
is 
symmetric-hyperbolic. 
Another feature of the proposed 
model is therefore the short-time 
existence and uniqueness of smooth solutions, 
which define genuinely causal viscoelastic flows with 
waves 
propagating at finite speed.

In heat-conductors, we complement the system by a Maxwell--Cattaneo equation for an energy-flux variable.
The system is still symmetric-hyperbolic, and 
smooth evolutions 
with finite-speed waves remain well-defined.

    \keywords{viscoelasticity, Maxwell fluid, balance laws, mathematical entropy, short-time well-posedness}
\end{abstract}




\section{Introduction}
\label{sec:introduction}

Continuum mechanics has proved a useful theory 
for a number of real materials, when
constitutive assumptions
allow one to predict motions 
unequivocally 
see e.g. \cite{marsden.je.hughes.tjr:mathematical,boyaval:viscoelastic}. 
However, for many materials, it remains a challenge to propose constitutive assumptions that both yield unequivocal predictions and realistic behaviours. 
It is the case of polymeric materials which exhibit viscoelastic stress relaxation 
see e.g. \cite{mackay.at.phillips.tn:on,bollada.pc.phillips.tn:on}.

Recently, one of us proposed a symmetric-hyperbolic system of conservation laws with relaxation source terms 
for the isentropic or isothermal compressible viscoelastic flows of Maxwell fluid \cite{boyaval:viscoelastic}. 
The system basically extends the polyconvex elastodynamics conservation laws of hyperelastic materials 
by introducing new structure variables 
and using Maxwell definition of viscosity. 
It 
contains a compressible version of the 
Upper-Convected Maxwell model, which is 
known to describe 
polymer flows
see e.g. \cite{1.549663,mackay.at.phillips.tn:on}. 
A salient feature of the proposed model is the existence and uniqueness (on short-times) of smooth solutions to the initial-value problem,
with waves 
propagating at finite-speed. 
However, for practical applications, many polymer flows are non-isothermal. 

\smallskip

Here, we first propose an extension of the recent model of \cite{boyaval:viscoelastic} 
to \emph{non-isothermal} flows. 
This is achieved by letting the two terms in the Helmholtz free energy 
of \cite{boyaval:viscoelastic} depend on the temperature $\temp$.
On the one hand, the volumetric 
term function of the determinant of the deformation gradient $\det\fgrad$ 
can be chosen as the standard perfect-gas law (or Noble-Abel stiffened-gas law, see variations of our model in Section~\ref{sec:generalizations-to-other-models}).
On the other hand, the polyconvex 
Hookean energy density term that is a function of $\fgrad$ 
and of positive-definite structure tensors 
has a molecular interpretation (see e.g. \cite{dressler.m.edwards.bj.ea:macroscopic,boyaval:viscoelastic})
which suggests how it depends on $\temp$.
The dependence on $\temp$ 
is also suggested by 
phenomenological models for rubbery 
materials  \cite{CHADWICK1984}. 
%
%
Heat-conduction is allowed at finite propagation-speed using Maxwell-Cattaneo law.

We propose a 
\emph{symmetric-hyperbolic} system of conservation laws with relaxation. 
Therefore, the system unequivocally models smooth non-isothermal 
viscoelastic flows (on short times) using finite-speed waves.

\smallskip

Second, 
we consider important variations of our model.
For instance, we add a term function of the \emph{inverse} deformation gradient 
to the polyconvex 
Hookean elastic energy density, 
as in models 
for 
polymer melts 
like the celebrated K-BKZ fluids. 
It allows for an equivalent Lagrangian description of our Eulerian model,
and in turn the 
consistency of our smooth solutions 
to the 
governing system of conservation laws 
with relaxation \cite{wagner:symm}. 
We also 
describe how to further modify the polyconvex 
strain energy term 
and account for finite-extensibility 
e.g. in applications to polymer flows,
using what is called Gent law for rubbery materials or FENE-P law for diluted polymer suspensions.

\smallskip

The paper is organized as follows. 
In Section \ref{sec:compressible-heat-conducting-maxwell-model} we first recall the 
continuum thermomechanics setting and the Maxwell--Cattaneo heat-conduction model.
Then, we proceed with the derivation of 
constitutive relations in a thermodynamically consistent way, 
looking for closure relations that guarantee the nonnegativity of the entropy production. 
(We presume 
that the identification of the \emph{energy storage mechanisms} and \emph{entropy storage mechanisms} provides a complete characterisation of the fluid,
deferring to the Section \ref{sec:short-time-well-posedness-for-compressible-heat-conducting-maxwell-fluid} a mathematical proof
that thermomechanical evolutions 
are unequivocally well-defined.) 

In Section \ref{sec:short-time-well-posedness-for-compressible-heat-conducting-maxwell-fluid}
auxiliary tensorial quantities are introduced, which can be interpreted as structure variables (a material metric)
and which enable us to find an additional balance law for a strictly convex scalar quantity.
As an application of the theory of first-order systems of conservation laws with 
source terms,
that additional law yields symmetrizability of our system of balance laws, and short-time well-posedness of the associated Cauchy problem.

In Section \ref{sec:generalizations-to-other-models} we discuss generalizations of our compressible heat-conducting model for Maxwell fluids
to (i) models with an equivalent Lagrangian description, in the spirit of other fluids of the K-BKZ class for polymer melts, 
to (ii) models with finite-extensibility effects, like in FENE-P fluids or Gent elastomers, 
and to (iii) models with a volumetric energy term more suitable for liquids than the perfect-gas law, using Noble--Abel stiffened-gas law namely.

\section{Compressible Maxwell model, with heat-conduction}
\label{sec:compressible-heat-conducting-maxwell-model}

\subsection{Balance laws of continuum thermomechanics}
\label{sec:balance-laws-of-continuum-thermomechanics}
Let us briefly recall the 
concepts of thermomechanics of continuous media while introducing the notation. We refer to e.g. \cite{marsden.je.hughes.tjr:mathematical,silhavy.m:mechanics} for a proper introduction to the theory. To begin with, we assume that the balance of total energy holds,
\begin{equation}
    \label{eq:balance-of-total-energy}
    \density \dd{E}{t} 
    =
    \divergence \left( \transpose{\cstress} \vecv - \efluxc \right)
    +
    \density \vectordot{\myvec{f}}{\vecv},
\end{equation}
where $\density$ denotes the density of the fluid, $E$ denotes its specific total energy, $\cstress$ denotes the Cauchy stress tensor, $\vecv$ denotes the spatial velocity field, $\efluxc$ denotes the energy flux, and $\myvec{f}$ denotes the specific body force. Note that we assume an Eulerian description in Euclidean space
with divergence operator
$\divergence$ and the material derivative
\begin{equation}
  \label{eq:material-derivative}
  \dd{}{t}
  \defeq
  \pd{}{t}
  +
  \vectordot{\vecv}{\nabla}.
\end{equation}
Assuming that the specific total energy $E$ is given by 
\begin{equation}
  \label{eq:total-energy}
  E
  \defeq
  e
  +
  \frac{1}{2} \absnorm{\vecv}^2,
\end{equation}
where $e$ denotes the specific internal energy of the material, 
and that the balance of total energy \eqref{eq:balance-of-total-energy} is Galilean invariant,
one can 
derive 
\begin{subequations}
  \begin{align}
      \label{eq:balance-of-mass}
      \dd{\density}{t} 
      &= - \density \divergence \vecv,
      \\
      \label{eq:balance-of-linear-momentum}
      \density \dd{\vecv}{t} 
      &=
      \divergence \cstress + \density \myvec{f},
  \end{align}
\end{subequations}
i.e. the balance of mass and the balance of linear momentum, respectively. 
An 
equation for the specific internal energy can be deduced from 
\eqref{eq:balance-of-linear-momentum} and \eqref{eq:balance-of-total-energy}
\begin{equation}
  \label{eq:ienergy-evolution-equation}
  \density \dd{\ienergy}{t}
  +
  \divergence \efluxc
  =
  \tensordot{\cstress}{\gradvl},
\end{equation}
where $\gradvl$ denotes the velocity gradient, i.e. $\gradvl \defeq \nabla \vecv$.

Here, $\tensordot{\tensorq{A}}{\tensorq{B}} \defeq \tr(\transpose{\tensorq{A}} \tensorq{B})$ denotes the Frobenius inner product as usual.

Lastly, we assume the evolution equation for the specific entropy $\entropy$,
\begin{equation}
  \label{eq:entropy-evolution-equation}
  \density \dd{\entropy}{t}
  +
  \divergence \entfluxc
  =
  \entprodc,
\end{equation}
where $\entfluxc$ denotes the entropy flux, and $\entprodc$ denotes the entropy production of the given fluid. 
Note that to guarantee the second law of thermodynamics, the entropy production must be a nonnegative quantity.
We also require a well-defined, 
positive thermodynamic temperature $\temp\defeq\partial_{\entropy}\ienergy>0$.

Our goal here is to 
predict ``viscoelastic'' fluid motions 
as the solutions to Cauchy initial-value problems that use the equations above, plus closure relations between the physical quantities introduced. 
By viscoelastic motions, we 
mean motions where the stress tensor is governed by an evolution 
with features similar to the seminal 
viscoelastic model of Maxwell \cite{Maxwell01011867}, see also \cite{boyaval:viscoelastic}.

\subsection{Thermodynamic 
of compressible Maxwell fluids, with heat conduction}
\label{sec:thermodynamic-background}

\subsubsection{Specific Helmholtz free energy}
\label{sec:specific-helmholtz-free-energy}

For 
\emph{compressible Maxwell fluids} 
we propose 
a \emph{fundamental thermodynamic relation} (or \emph{complete equation of state}) given by a specific Helmholtz free energy $\fenergy$
that is the sum of three terms as follows
\begin{equation}
  \label{eq:fenergy}
  \fenergy(\density, \temp, \ctensor, \efluxc) 
  \defeq
  \fenergy_{\mathrm{s}}(\density, \temp)
  +
  \frac{\alpha}{2} \left( K(\temp) \tr \ctensor - \kboltzmann \temp \log \det \ctensor \right)
  +
  \frac{\tau_0}{2 \kappa} \absnorm{\efluxc}^2 \,.
\end{equation}

We have denoted $\kboltzmann$ the Boltzmann constant, 
$\alpha > 0$ a degree of elasticity per unit mass, $\tau_0 > 0$ a relaxation time and $\kappa > 0$ a thermal conductivity. 

For 
\emph{heat-conducting} fluids, we have added a dependence of $\fenergy$
on internal degrees of freedom through an energy flux vector $\efluxc$. 
The particular case without heat conduction coincides with the formal limit $\efluxc=\kappa \nabla \temp$, $\kappa\to0$.

In any case, 
the fundamental variables of the specific Helmholtz free energy other than the energy flux vector $\efluxc$ 
are
the temperature $\temp$, and the density $\density$ together with an additional symmetric positive-definite tensorial quantity $\ctensor$ 
to be linked further with the 
deformations 
of the 
body (a measure of strain 
beyond the purely volumetric term $\rho$).

The volumetric 
contribution $\fenergy_{\mathrm{s}}$ 
to the specific Helmholtz free energy $\fenergy$ is for the moment left unspecified. 
However, the choice of $\fenergy_{\mathrm{s}}$ 
matters in well-posedness. 
We use particular (analytical) 
formulae 
in Sections \ref{sec:governing-equations-symmetrizable-system-balance-laws} and \ref{sec:generalizations-to-other-models}.

The second term in the right-hand side of \eqref{eq:fenergy} 
characterizes Hookean-elastic fluids with stiffness $K(\theta)>0$ like Maxwell fluids. For polymer suspensions, it can be derived from a 
molecular theory when $\ctensor$ 
is 
the conformation tensor of diluted Hookean-elastic dumbbells 
with 
spring factor $K(\theta)$, see e.g. \cite{boyaval:viscoelastic} for a reference.
The tensor then expresses the degree of orientation and elongation of diluted polymer chains,
and following the seminal work on non-isothermal viscoelastic fluid models \cite{marrucci.g:free} we can assume here
\begin{equation}
  \label{eq:elastic-spring-factor-affine-function}
  K(\temp)
  \defeq
  K_0
  +
  K_1 \temp,
\end{equation}
with constant parameters $K_0, K_1 > 0$. 
Moreover, $\ctensor$ 
has the meaning of a strain measure similar to the left Cauchy-Green deformation tensor $\fgrad\transpose{\fgrad}$,
$$
\fgrad \defeq \pd{\deformation_t}{\myvec{a}}\circ\deformation_t^{-1}
$$ 
being the gradient with respect to material coordinates $\myvec{a}$ 
of a deformation $\deformation_t: \myvec{a}\in \R^3 \to \R^3\ni \myvec{x}$ 
associated with the continuous motion of velocity $\vecv$:
\begin{equation}
  \label{eq:flf}
  \dd{\fgrad}{t}
  =
  \gradvl \fgrad.
\end{equation}
A natural time rate for $\ctensor$ is then given by the \emph{upper convected 
derivative}
\begin{equation}
  \label{eq:upper-convected-derivative}
  \fid{\ctensor}
  \defeq
  \dd{\ctensor}{t}
  -
  \gradvl \ctensor
  -
  \ctensor \transpose{\gradvl},
\end{equation}
an objective time derivative which ensures material frame-indifference of our constitutive assumptions.

Lastly, for \emph{entangled} polymer melts and other rubber-like materials, 
one rather interprets $\ctensor$
as a measure of strain similar to the inverse right Cauchy-Green 
tensor $\transposei{\fgrad}\inverse{\fgrad}$, 
by a network kinetic theory
which still suggest $K(\theta)$ to be an affine function of the temperature in a first approach \cite{Marrucci1973}.
We shall inspect generalizations of our model 
using various 
strain measures $\ctensor$ 
---like in the so-called K-BKZ fluid models \cite{bernstein-kearsley-zapas:a-study}---
in our last Section~\ref{sec:generalizations-to-other-models}.
Note that for non-conducting polymer melts, 
the Helmholtz free-energy formula 
with affine dependence on temperature is similar to phenomenological models proposed for rubbery materials 
\cite{CHADWICK1984}.
Moreover, using the inverse right Cauchy-Green measure of strain, with a time rate different than \eqref{eq:upper-convected-derivative},
shall turn out mathematically useful for a 
consistent interpretation of our 
model, see Section~\ref{sec:eulerian-lagrangian-equivalence}.

Refined physical phenomenas like the finite-extensibility of polymers 
can be introduced in \eqref{eq:fenergy} using e.g. the so-called Gent or FENE-P law instead of the Hookean law for the 
elastic energy term.
We also refer to Section~\ref{sec:generalizations-to-other-models}.
First, let us specify the simpler non-isothermal compressible Maxwell model, possibly heat-conducting,
and its mathematical properties for predicting time evolutions.

In any case, a non-linear dependence of $K$ on the temperature as suggested in e.g. \cite{1.549663} 
appears to be a non-trivial modification of our model; its analysis is 
not easily generalized from the one to be developed here.

\subsubsection{Specific internal energy}
\label{sec:specific-internal-energy}

We can derive a fundamental thermodynamic relation of compressible Maxwell 
fluids in terms of the internal energy $\ienergy = \fenergy + \temp \entropy$. 
Using 
\eqref{eq:fenergy} 
we find that
\begin{equation}
  \label{eq:0} 
  \entropy(\density, \temp, \ctensor, \efluxc)
  \defeq - \pd{\fenergy}{\temp}
  =
  -
  \pd{\fenergy_{\mathrm{s}}}{\temp}(\density, \temp) 
  -
  \frac{\alpha}{2} \left( K_1 \tr \ctensor - \kboltzmann \log \det \ctensor \right).
\end{equation}
We assume $\ppd{\fenergy_{\mathrm{s}}}{\temp} < 0$ so the relation \eqref{eq:0} can be inverted for the temperature
\begin{equation}
  \label{eq:1}
  \temp(\density, \entropy, \ctensor, \efluxc)
  =
  \pd{\ienergy_{\mathrm{s}}}{\entropy} 
  \left( 
    \density, 
    \entropy + \frac{\alpha}{2} \left( K_1 \tr \ctensor - \kboltzmann \log \det \ctensor \right) 
  \right),
\end{equation}
where $\ienergy_{\mathrm{s}}$ denotes 
the volumetric internal energy 
in 
variables $(\density, \entropy)$ such that
\begin{equation}
  \label{eq:ienergy-solvent-temp}
  \entropy_{\mathrm{s}}(\density,\temp) \defeq -\pd{\fenergy_{\mathrm{s}}}{\temp}(\density, \temp)
  \qquad
  \ienergy_{\mathrm{s}}(\density, \entropy_{\mathrm{s}}(\density, \temp))
  \defeq
  \fenergy_{\mathrm{s}}(\density, \temp) - \temp \pd{\fenergy_{\mathrm{s}}}{\temp}(\density, \temp)\,.
\end{equation}
A full thermodynamic description of compressible Maxwell 
fluids, equivalent to the fundamental thermodynamic relation in terms of the Helmholtz free energy \eqref{eq:fenergy},
is given by the full internal energy of the fluid, 
i.e.
\begin{equation}
  \label{eq:ienergy}
  \ienergy(\density, \entropy, \ctensor, \efluxc)
  =
  \ienergy_{\mathrm{s}}
  \left( 
    \density, 
    \entropy + \frac{\alpha}{2} \left( K_1 \tr \ctensor - \kboltzmann \log \det \ctensor \right) 
  \right)
  + 
  \frac{\alpha}{2} K_0 \tr \ctensor
  +
  \frac{\tau_0}{2 \kappa} \absnorm{\efluxc}^2  \,.
\end{equation}
On postulating the specific internal energy $\ienergy$, 
we assume $\ppd{\ienergy_{\mathrm{s}}}{\entropy} > 0$
to formulate an explicit formula for the specific Helmholtz free energy $\fenergy$ in turn.



\subsection{Constitutive closure relations}
\label{sec:constitutive-relations}
Our objective now 
is to derive a 
compressible heat-conducting Maxwell model by closing the generic balance laws of Section~\ref{sec:balance-laws-of-continuum-thermomechanics}
consistently with the thermodynamics 
postulated in Section \ref{sec:thermodynamic-background}. 
The derivation outlined below follows 
\cite{rajagopal.kr.srinivasa.ar:thermodynamic} see also \cite{malek.j.rajagopal.kr.ea:on}, \cite{hron.j.milos.v.ea:on}, or \cite{malek.j.prusa.v:derivation}. 
By specifying a fundamental thermodynamic relation describing the material one can determine possible constitutive relations for the Cauchy stress tensor $\cstress$, the evolution equation for the conformation tensor $\ctensor$, and the energy and entropy fluxes $\efluxc$, $\entfluxc$, in a thermodynamically consistent way. 
Indeed, a suitable choice of the aforementioned relations yields an explicit formula for the entropy production $\entprodc$ which is nonnegative, thus assuring the validity of the second law of thermodynamics. 
Of course, such a model derivation is only formal and it remains to show that motions can be well-defined,
typically as solutions to well-posed Cauchy problems: this will be the subject of Section~\ref{sec:short-time-well-posedness-for-compressible-heat-conducting-maxwell-fluid}. To proceed, let us formulate the evolution equation for the specific entropy from the postulated fundamental thermodynamic relation \eqref{eq:ienergy}.

\subsubsection{Evolution equation for entropy}
\label{sec:evolution-equation-entropy}
Given $\ienergy = \ienergy(\density, \entropy, \ctensor, \efluxc)$ supposedly
well-defined by \eqref{eq:ienergy}, the chain rule yields
\begin{equation}
  \label{eq:3}
  \density \dd{\ienergy}{t}
  =
  \density 
  \left(
    \pd{\ienergy}{\entropy} \dd{\entropy}{t}
    +
    \pd{\ienergy}{\density} \dd{\density}{t}
    +
    \tensordot{\pd{\ienergy}{\ctensor}}{\dd{\ctensor}{t}}
    +
    \vectordot{\pd{\ienergy}{\efluxc}}{\dd{\efluxc}{t}}
  \right).
\end{equation}
Using the balance of mass \eqref{eq:balance-of-mass} and the evolution equation for the internal energy \eqref{eq:ienergy-evolution-equation}, equation \eqref{eq:3} can then be reformulated as follows
\begin{equation}
  \label{eq:4}
  \tensordot{\cstress}{\gradvl}
  -
  \divergence \efluxc
  =
  \density \temp \dd{\entropy}{t}
  -
  \pressure \divergence \vecv
  +
  \density \tensordot{\pd{\ienergy}{\ctensor}}{\dd{\ctensor}{t}},
  +
  \density \vectordot{\pd{\ienergy}{\efluxc}}{\dd{\efluxc}{t}}
\end{equation}
using the temperature $\temp$ assuming $e_s$ twice differentiable with respect to its second argument (recall also \eqref{eq:1}) and the pressure $\pressure$ assuming $e_s$ differentiable with respect to its first argument, 
\begin{subequations}
  \label{eq:temperature-pressure}
  \begin{align}
    \label{eq:temperature}
    \temp(\density, \entropy, \ctensor, \efluxc)
    &\defeq
    \pd{\ienergy}{\entropy}(\density, \entropy, \ctensor, \efluxc)
    =
    \pd{\ienergy_{\mathrm{s}}}{\entropy} 
    \left( 
      \density, 
      \entropy + \frac{\alpha}{2} \left( K_1 \tr \ctensor - \kboltzmann \log \det \ctensor \right) 
    \right),
    \\
    \label{eq:pressure}
    \pressure(\density, \entropy, \ctensor, \efluxc)
    &\defeq
    \density^2 \pd{\ienergy}{\density} (\density, \entropy, \ctensor, \efluxc)
    =
    \density^2 \pd{\ienergy_{\mathrm{s}}}{\density} 
    \left( 
      \density, 
      \entropy + \frac{\alpha}{2} \left( K_1 \tr \ctensor - \kboltzmann \log \det \ctensor \right) 
    \right).
  \end{align}
\end{subequations}
Exploiting 
\eqref{eq:ienergy} and the formula \eqref{eq:upper-convected-derivative}, equation \eqref{eq:4} can then be rewritten as
\begin{multline}
  \label{eq:5}
  \tensordot{\cstress}{\gradvl}
  -
  \divergence \efluxc
  =
  \density \temp \dd{\entropy}{t}
  -
  \pressure \divergence \vecv
  \\
  +
  \frac{\alpha \density}{2}
  \tensordot{
    \left( K(\temp) \identity - \kboltzmann \temp \inverse{\ctensor} \right)
  }{
    \left( \fid{\ctensor} + \gradvl \ctensor + \ctensor \transpose{\gradvl} \right)
  }
  +
  \frac{\tau_0 \density}{\kappa} \vectordot{\efluxc}{\dd{\efluxc}{t}}
  .
\end{multline}
To cast \eqref{eq:5} into the general evolution equation for the specific entropy \eqref{eq:entropy-evolution-equation}, we divide \eqref{eq:5} by the temperature and we arrive at
\begin{multline}
  \label{eq:6}
  \density \dd{\entropy}{t}
  +
  \divergence \left( 
   \frac{\efluxc}{\temp}
  \right)
  =
  \frac{1}{\temp}
  \bigg[
    \tensordot{\left( \cstress - \alpha \density K(\temp) \ctensor \right)}{\gradsym}
    +
    \left( p + \alpha \density \kboltzmann \temp \right) \divergence \vecv
    \\
    -
    \frac{\alpha \density}{2}
    \tensordot{
      \left( K(\temp) \identity - \kboltzmann \temp \inverse{\ctensor} \right)
    }{
      \fid{\ctensor}
    }    
    - \vectordot{\efluxc}{\left(  \frac{\nabla \temp}{\temp} 
     + \frac{\tau_0 \density}{\kappa} \dd{\efluxc}{t} \right)}
  \bigg],
\end{multline}
where we used the fact that the Cauchy stress tensor $\cstress$ and the conformation tensor $\ctensor$ are symmetric. 
Finally, we split the Cauchy stress tensor $\cstress$ as
\begin{equation}
  \label{eq:cstress-decomposition}
  \cstress
  =
  \mns \identity
  +
  \traceless{\cstress}
\end{equation}
where $\mns \defeq \frac{1}{3} \tr \cstress$ is the mean normal stress 
and $\traceless{\cstress} \defeq \cstress - \frac{1}{3} (\tr \cstress) \identity$ the traceless part. 
The manipulation allows us to separate the volume-changing and volume-preserving deformations, see e.g. \cite{malek.j.prusa.v:derivation}. 
Rewriting \eqref{eq:6} as
\begin{multline}
  \label{eq:7}
  \density \dd{\entropy}{t}
  +
  \divergence \left( \frac{\efluxc}{\temp} \right)
  =
  \frac{1}{\temp}
  \bigg[
    \tensordot{\left( \traceless{\cstress} - \alpha \density K(\temp) \traceless{\ctensor} \right)}{\traceless{\gradsym}}
    \\
    +
    \left( 
      \mns + p - \frac{1}{3} \alpha \density K(\temp) \tr \ctensor + \alpha \density \kboltzmann \temp 
    \right) 
    \divergence \vecv
    \\
    -
    \frac{\alpha \density}{2}
    \tensordot{
      \left( K(\temp) \identity - \kboltzmann \temp \inverse{\ctensor} \right)
    }{
      \fid{\ctensor}
    }
    -
    \vectordot{\efluxc}{\left( \frac{\nabla \temp}{\temp}+ \frac{\tau_0 \density}{\kappa} \dd{\efluxc}{t} \right)}
  \bigg]
\end{multline}
we have finally identified four ``independent'' entropy-producing mechanisms 
on the right-hand side of \eqref{eq:7}. 
It allows one to straightforwardly propose constitutive relations in a thermodynamically consistent way,
so as to assure the nonnegativity of the individual contributions to the entropy production.

\subsubsection{Entropy production and viscoelastic constitutive relations}
\label{sec:constitutive-relations-and-entropy-production}
A possible closure for the constitutive relations that yields the desired heat-conducting Maxwell model reads as follows
\begin{subequations}
  \label{eq:constitutive-relations}
  \begin{align}
    \label{eq:constitutive-relations-mns}
    \mns 
    &=
    - 
    \pressure
    +
    \frac{1}{3} \alpha \density K(\temp) \tr \ctensor 
    -
    \alpha \density \kboltzmann \temp,
    \\
    \label{eq:constitutive-relations-traceless-cstress}
    \traceless{\cstress}
    &=
    \alpha \density K(\temp) \traceless{\ctensor},
    \\
    \label{eq:constitutive-relations-ctensor}
    \fid{\ctensor} 
    &= 
    - 
    \frac{4 K(\temp)}{\dragcoef} \ctensor + \frac{4 \kboltzmann \temp}{\dragcoef} \identity,
    \\
    \label{eq:constitutive-relations-exflux}
    \frac{\tau_0 \density}{\kappa}
    \dd{\efluxc}{t}
    &=
    - \frac{\nabla \temp}{\temp}
    -    \frac{\efluxc}{\kappa \temp},
    \\
    \label{eq:constitutive-relations-entflux}
    \entfluxc
    &=
    \frac{\efluxc}{\temp},
  \end{align}
\end{subequations}
where the 
parameter $\dragcoef>0$ can be interpreted as the hydrodynamic drag coefficient 
in molecular 
theories for polymer suspensions, 
recall Section~\ref{sec:specific-helmholtz-free-energy}.
Such a friction coefficient $\dragcoef$ a priori depends on the temperature $\temp$ 
(so $\dragcoef/\kboltzmann \temp$ actually scales like a relaxation time, characteristic of a polymer fluid) 
and possibly also on the conformation $\ctensor$. 
That dependence has important quantitative consequences for the numerical values of predicted motions,
see e.g. \cite{1.549663}. 
However, the dependence of $\dragcoef$ on $\temp$ is only \emph{algebraic} a priori (i.e. it is not through its derivatives),
so it has no influence on our (short-time) well-posedness result provided it is smooth. 
Furthermore, we shall come back to the dependence of $\dragcoef$ on $\ctensor$ later in Section~\ref{sec:generalizations-to-other-models},
to introduce finite-extensibility effects using the so-called Gent or FENE-P law.
Extensions in that direction beyond the Gent or FENE-P law are certainly desirable physically 
but are left here as issues that could possibly be addressed in the future, see our conclusion in Section~\ref{sec:conclusion}.

Entropy production $\entprodc$, i.e. the right-hand side of \eqref{eq:7}, now reads
\begin{align}
  \label{eq:entprod}
  \entprodc(\density, \temp, \ctensor, \efluxc)
  &=
  \frac{\absnorm{\efluxc}^2}{\kappa \temp^2}
  +
  \frac{2 \alpha \density}{\dragcoef \temp}
  \tensordot{
    \left( K(\temp) \identity - \kboltzmann \temp \inverse{\ctensor} \right)
  }{
    \left( K(\temp) \ctensor - \kboltzmann \temp \identity \right)
  }
  \\
  &=
   \frac{\absnorm{\efluxc}^2}{\kappa \temp^2}
  +
  \frac{2 \alpha \density}{\dragcoef \temp}
  \absnorm{K(\temp) \ctensor^{\frac{1}{2}} - \kboltzmann \temp \ctensor^{-\frac{1}{2}}}^2
  \ge
  0,
\end{align}
and the second law of thermodynamics is satisfied.
Here, $\absnorm{\tensorq{A}} \defeq \sqrt{\tensordot{\tensorq{A}}{\tensorq{A}}}$ denotes the Frobenius norm as usual.

Further, 
combining \eqref{eq:cstress-decomposition}, \eqref{eq:constitutive-relations-mns}, and \eqref{eq:constitutive-relations-traceless-cstress} we can write
\begin{subequations}
  \label{eq:maxwell-cstress}
  \begin{align}
    \label{eq:maxwell-cstress-1}
    \cstress
    &=
    -p \identity
    +
    \tensorq{S},
    \\
    \label{eq:maxwell-estress}
    \tensorq{S}
    &\defeq
    \alpha \density \left( K(\temp) \ctensor - \kboltzmann \temp \identity \right),
  \end{align}
\end{subequations}
where $\tensorq{S}$ denotes the so-called \emph{extra stress tensor}
for the compressible non-isothermal Maxwell fluids, see e.g. \cite{dressler.m.edwards.bj.ea:macroscopic}.
A straightforward computation shows that $\tensorq{S}$ satisfies a multi-dimensional version of Maxwell seminal model \cite{Maxwell01011867},
so our model 
is indeed of that \emph{viscoelastic} type.

\begin{remark} 
\label{sec:maxwell-cattaneo-law}
Fourier's law of heat conduction
\begin{equation}
  \label{eq:fourier-law}
  \efluxc
  =
  - 
  \kappa \nabla \temp
\end{equation}
is widely used 
for heat-conducting continuous media (assumed homogeneous and thermally isotropic here 
for the sake of simplicity). However, Fourier's law formally yields propagation of thermal signals with infinite speed, see e.g. \cite{joseph-preziosi:heat}, which in turn violates the principle of causality. 
To address the causality issue, the following Galilean-invariant 
Maxwell--Cattaneo law \cite{cattaneo}
\begin{equation}
  \label{eq:maxwell-cattaneo-variant-law}
  \tau_0 \density \dd{\efluxc}{t}
  =
  -  \frac{\kappa \nabla \temp}{\temp}
  -
  \frac{\efluxc}{\temp}
\end{equation}
is one standard possible alternative, which 
reduces to the standard Fourier's law when $\tau_0 \to 0^+$. 
A reason why we opt for the heat conduction law in the form of \eqref{eq:maxwell-cattaneo-variant-law}
is the quadratic dependence of the corresponding specific internal energy on the heat flux $\efluxc$,
see Section \ref{sec:specific-internal-energy}.
It allows a simple proof of the strict convexity 
in Section \ref{sec:governing-equations-symmetrizable-system-balance-laws}. 
It is also noteworthy that our model 
covers heat-insulators (non-conducting materials) in the formal limits 
$\tau_0 \to 0^+$ (so \eqref{eq:fourier-law} holds), $\kappa\to0$.
\end{remark}

Having derived the constitutive closure relations \eqref{eq:constitutive-relations} we can now formulate 
a formally-closed system of equations for the unknowns $(\density, \entropy, \efluxc, \vecv, \ctensor)$
that govern the thermomechanical evolution of the material:
\begin{subequations}
  \label{eq:governing-equations}
  \begin{align}
    \label{eq:governing-equations-density}
    \dd{\density}{t} 
    &= - \density \divergence \vecv,
    \\
    \label{eq:governing-equations-entropy}
    \density \dd{\entropy}{t}
    +
    \divergence \left( \frac{\efluxc}{\temp(\density, \entropy, \ctensor, \efluxc)}\right)
    &=
    \entprodc(\density, \entropy, \ctensor, \efluxc),   
    \\
    \label{eq:governing-equations-eflux}
    \density \dd{\efluxc}{t}
    &=
    -
    \frac{1}{\tau_0\temp(\density, \entropy, \ctensor, \efluxc)}
    \left(
      \kappa \nabla \temp(\density, \entropy, \ctensor, \efluxc)
      +
      \efluxc
    \right)
    \\
    \label{eq:governing-equations-linear-momentum}
    \density \dd{\vecv}{t} 
    &=
    \divergence \left( \cstress(\density, \entropy, \ctensor) \right) + \density \myvec{f},
    \\
    \label{eq:governing-equations-conformation-tensor}
    \fid{\ctensor} 
    &= 
    - 
    \frac{4 K(\temp(\density, \entropy, \ctensor, \efluxc))}{\dragcoef} 
    \ctensor + \frac{4 \kboltzmann \temp(\density, \entropy, \ctensor, \efluxc)}{\dragcoef} \identity,
  \end{align}
\end{subequations}
where the Cauchy stress tensor $\cstress$, the temperature $\temp$, the pressure $\pressure$ 
and the entropy production $\entprodc$ are assumed to be smooth functions of the main unknown variables 
given by \eqref{eq:maxwell-cstress}, \eqref{eq:temperature-pressure} and \eqref{eq:entprod}.
We also recall that $\dragcoef$ can be a 
smooth function of $\temp$ here at the present stage, and we refer to Section~\ref{sec:generalizations-to-other-models}
for a possible (physically-meaningful) additional dependence of $\dragcoef$ on $\ctensor$.

The governing equations~\eqref{eq:governing-equations} shall define a mechanical model
of compressible heat-conducting Maxwell fluids provided unequivocal (i.e. univocal) motions can be defined.
To that aim, we consider here the solutions to well-posed Cauchy problems 
for \eqref{eq:governing-equations} complemented by initial conditions. 

\section{Well-posedness} 
\label{sec:short-time-well-posedness-for-compressible-heat-conducting-maxwell-fluid}
Let us consider the \emph{short-time} existence of unique continuous-in-time solutions to 
Cauchy problems for \eqref{eq:governing-equations} 
like in Th.~\ref{th:short-time-well-posedness} below
as a minimal well-posedness requirement.
To use Th.~\ref{th:short-time-well-posedness}, 
the balance laws \eqref{eq:governing-equations-density}--\eqref{eq:governing-equations-linear-momentum} must be reformulated as 
a first-order system of \emph{conservation laws} \eqref{eq:system-of-balance-laws} with smooth flux and source terms. 
But the evolution equation for the conformation tensor \eqref{eq:governing-equations-conformation-tensor} cannot be cast into such a conservative form.
This makes the construction of univocal solutions to 
the associated multidimensional Cauchy problems very difficult. 
Following \cite{boyaval:viscoelastic}, we thus introduce a new tensor variable, to embed \eqref{eq:governing-equations}
in a \emph{system of conservation laws with smooth flux and relaxation source terms}.

\subsection{Governing equations 
as conservation laws with relaxation 
source terms} 
\label{sec:governing-equations}

Let us define the tensorial quantity $\tensorq{A}$ by the formula
\begin{equation}
  \label{eq:A-definition}
  \tensorq{A} \defeq \inverse{\fgrad} \ctensor \fgrad^{-\top}.
\end{equation}
The tensor $\tensorq{A}$ is assumed symmetric positive-definite, consistently with the definition of $\ctensor$.
The definition of the tensorial quantity $\tensorq{A}$ is motivated by the K-BKZ theory, 
see the seminal works \cite{kaye:non-newtonian,bernstein-kearsley-zapas:a-study} 
where $\tensorq{A}$ is used implicitly to derive an integro-differential isothermal viscoelastic fluid model similar to elastodynamics.
More specifically, it is motivated by the \emph{polyconvex} elastodynamics, 
which actually allows one to define univocal motions of hyperelastic materials after embedding 
the 
governing equations 
into 
a symmetric-hyperbolic system of conservation laws, 
see e.g. \cite{wagner:symm}.
The tensor $\tensorq{A}$ can be interpreted as a material metric
in the context of viscoelasticity, 
see \cite{boyaval:viscoelastic} for a brief description of its physical meaning. 
In any case, 
the tensor $\tensorq{A}$ satisfies
\begin{equation}
  \label{eq:A-evolution-equation}
  \dd{\tensorq{A}}{t} 
  = 
  - 
  \frac{4 K(\temp)}{\dragcoef}
  \tensorq{A}
  +
  \frac{4 \kboltzmann \temp}{\dragcoef} \inverse{\fgrad} \fgrad^{-\top}
\end{equation}
upon using the differential identity $\dd{(\inverse{\fgrad})}{t} = - \inverse{\fgrad} \dd{\fgrad}{t} \inverse{\fgrad}$
with \eqref{eq:flf} and \eqref{eq:governing-equations-conformation-tensor}.
%

Equation \eqref{eq:A-evolution-equation}
allows one to embed \eqref{eq:governing-equations-density}--\eqref{eq:governing-equations-linear-momentum} into 
a conservative 
system 
for the unknowns $\myvec{u} \defeq (\density, \density \entropy, \density \efluxc, \density \vecv, \density \fgrad, \density \tensorq{A})$:
\begin{subequations}
  \label{eq:balance-laws}
  \begin{align}
    \label{eq:balance-laws-density}
    \pd{\density}{t} + \divergence \left( \density \vecv \right) 
    &= 
    0,
    \\
    \label{eq:balance-laws-entropy}
    \pd{\left( \density \entropy \right)}{t} + \divergence \left( \density \entropy \vecv + \frac{\efluxc}{\temp} \right)
    &=
    \entprodc,  
    \\
    \label{eq:balance-laws-eflux}
    \pd{\left( \density \efluxc \right)}{t}
    +
    \divergence
    \left(
      \density \efluxc \otimes \vecv
      +
      \frac{\kappa}{\tau_0} e^{-\temp} 
      \identity
    \right)
    &=
    -
    \frac{\efluxc}{\tau_0 \temp}
    \\
    \label{eq:balance-laws-linear-momentum}
    \pd{\left( \density \vecv \right)}{t} 
    + 
    \divergence 
    \left(
      \density \vecv \otimes \vecv - \cstress
    \right) 
    &= 
    \density \myvec{f},
    \\
    \label{eq:balance-laws-fgrad}
    \pd{\left( \density \fgrad \right)}{t} 
    + 
    \divergence 
    \left( 
      \density \left( \fgrad \otimes \vecv - \vecv \otimes \transpose{\fgrad} \right) 
    \right)
    &=
    \tensorq{0},   
    \\
    \label{eq:balance-laws-tensor-A}
    \pd{\left( \density \tensorq{A} \right)}{t} 
    + 
    \divergence \left( \density \tensorq{A} \otimes \vecv \right) 
    &= 
    - 
    \frac{4 \density K(\temp)}{\dragcoef}
    \tensorq{A}
    +
    \frac{4 \density \kboltzmann \temp}{\dragcoef} \inverse{\fgrad} \fgrad^{-\top},
  \end{align}
\end{subequations}
where the entropy production $\entprodc$, the temperature $\temp$, and the Cauchy stress tensor $\cstress$ are given as functions of the unknowns $(\density, \density \entropy, \density \efluxc, \density \vecv, \density \fgrad, \density \tensorq{A})$ by the relations \eqref{eq:entprod}, \eqref{eq:temperature}, and \eqref{eq:maxwell-cstress}, respectively.
Here, the symbol $\otimes$ denotes the outer product (of tensors) and the divergence operator is always taken with respect to the last coordinate. 
This means that we have, in coordinates,
\begin{equation}
  \left[ \divergence \left( \density \tensorq{F} \otimes \vecv \right) \right]^i_\alpha
  =
  \pd{\left( \density [\tensorc{F}]^i_\alpha \vecvc^k \right)}{x^k},
\end{equation} 
and analogously in the remaining cases. 
The conservation law \eqref{eq:balance-laws-fgrad} is classically derived from 
\eqref{eq:flf} using 
\eqref{eq:balance-of-mass}, the so-called Piola identity
\begin{equation}
  \label{eq:piola-identity}
  \divergence \left( \frac{\transpose{\fgrad}}{\det \fgrad} \right)
  =
  0,
\end{equation}
assuming 
that we are dealing with a homogeneous material, so that
\begin{equation}
  \label{eq:rhor}
  \rhor \defeq \density \det \fgrad,
\end{equation}
i.e. the density of the material in the reference configuration, is constant. 


Now, we would like to use the standard results for \eqref{eq:balance-laws} which we rewrite in the form
\begin{equation}
  \label{eq:system-of-balance-laws}
  \pd{\myvec{u}}{t} + \sum_{j=1}^d \pd{\myvec{f}^j(\myvec{u})}{x^j} = \myvec{c}(\myvec{u})
\end{equation}
with smooth flux $\myvec{f}^j: \R^n \to \R^n$ and relaxation source terms $\myvec{c}: \R^n \to \R^n$
see e.g. \cite{serre_1999,dafermos.cm:hyperbolic*1,benzoni-serre:multi}.
Precisely, we consider smooth fields $\myvec{u} : \R^d \times [0, +\infty) \to \R^n$
that are equivalently solutions of the \emph{quasilinear system}
\begin{equation}
  \label{eq:quasilinear-system}
  \pd{\myvec{u}}{t} + \sum_{j=1}^d \tensorq{A}^j(\myvec{u}) \pd{\myvec{u}}{x^j} = \myvec{c}(\myvec{u}),
\end{equation}
where the $n \times n$ matrices $\tensorq{A}^j$ are given by
\begin{equation}
  \label{eq:matrices}
  \tensorq{A}^j(\myvec{u}) \defeq \pd{\myvec{f}^j(\myvec{u})}{\myvec{u}}.
\end{equation}
\begin{theorem}[Short-time well-posedness]
  \label{th:short-time-well-posedness}
  Let $\mathcal{U}$ be an open subset of $\R^n$. We assume that $\tensorq{A}^j$ and $\myvec{c}$ are $C^{\infty}$ functions of $\myvec{u} \in \mathcal{U}$ and that \eqref{eq:quasilinear-system} is \emph{Friedrichs-symmetrizable in $\mathcal{U}$} i.e. 
  there exists a $C^{\infty}$ mapping $\tensorq{S} : \mathcal{U} \to \R^{n \times n}$ such that $\tensorq{S}(\myvec{u})$ is symmetric positive-definite, and the matrices $\tensorq{S}(\myvec{u})\tensorq{A}^j(\myvec{u})$ are symmetric for all $\myvec{u} \in \mathcal{U}$.
  Let $\myvec{u}_0 \in \mathcal{U}$ and $\widetilde{\myvec{u}_0} \in H^s(\R^d; \R^n)$ with $s > 1 + \frac{d}{2}$ such that $\myvec{u}_0 + \widetilde{\myvec{u}_0}$ is compactly supported in $\mathcal{U}$. 
  
  Then, there exists $T > 0$ and a unique classical solution $\myvec{u} \in C^{1}(\R^d \times [ 0, T]; \, \mathcal{U})$ 
  to \eqref{eq:system-of-balance-laws} with the initial data $\myvec{u}(0) = \myvec{u}_0 + \widetilde{\myvec{u}_0}$. 
  
  Furthermore, $\myvec{u} - \myvec{u}_0 \in C([0, T]; H^s) \cap C^1([0, T]; H^{s-1})$.
\end{theorem}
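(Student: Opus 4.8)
The plan is to treat \eqref{eq:quasilinear-system} as a prototypical \emph{Friedrichs-symmetrizable} quasilinear first-order system and invoke the classical local existence theory (see e.g. \cite{serre_1999,dafermos.cm:hyperbolic*1,benzoni-serre:multi}), the source term $\myvec{c}$ entering only as a smooth zeroth-order perturbation that does not affect the argument. First I would \emph{localize}. Since the initial datum $\myvec{u}_0 + \widetilde{\myvec{u}_0}$ takes values in a compact set $\mathcal{K}\subset\mathcal{U}$ (a constant background state plus a compactly supported perturbation), I would replace $\tensorq{A}^j$, $\myvec{c}$ and the symmetrizer $\tensorq{S}$ by $C^\infty$ functions on all of $\R^n$ that coincide with the originals on a neighbourhood of $\mathcal{K}$, are constant outside a larger compact set, and such that the modified $\tensorq{S}$ is uniformly positive-definite with $\tensorq{S}\tensorq{A}^j$ symmetric everywhere. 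A solution of the modified system that stays close to $\mathcal{K}$ solves the original one, and membership $\myvec{u}\in\mathcal{U}$ is recovered a posteriori from the uniform bound below together with a continuity argument.

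Next I would construct approximate solutions, for instance by the iteration $\partial_t \myvec{u}^{(k+1)} + \sum_j \tensorq{A}^j(\myvec{u}^{(k)})\,\partial_{x^j}\myvec{u}^{(k+1)} = \myvec{c}(\myvec{u}^{(k)})$ with $\myvec{u}^{(k)}(0)=\myvec{u}_0+\widetilde{\myvec{u}_0}$, each step being a \emph{linear} symmetric-hyperbolic problem solvable in $C([0,T];H^s)$ via the symmetrizer $\tensorq{S}(\myvec{u}^{(k)})$ (a parabolic regularization with an $\varepsilon\Delta$ term would be an equivalent alternative). The crucial step is the \emph{uniform} a priori estimate: differentiating the equation up to order $s$, pairing with $\tensorq{S}(\myvec{u}^{(k)})$ times the differentiated unknown, using symmetry of $\tensorq{S}\tensorq{A}^j$ to integrate by parts the top-order term, and controlling the commutators between the derivatives and $\tensorq{A}^j(\myvec{u}^{(k)})$ by Moser/Kato--Ponce inequalities, one obtains $\tfrac{d}{dt}\|\myvec{u}^{(k+1)}(t)\|_{H^s}^2 \le C\!\left(\|\myvec{u}^{(k)}(t)\|_{H^s}\right)\bigl(1+\|\myvec{u}^{(k+1)}(t)\|_{H^s}^2\bigr)$. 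A Gronwall/continuation argument then yields $T>0$ and a radius $R$, depending only on $\|\widetilde{\myvec{u}_0}\|_{H^s}$, such that every iterate stays bounded by $R$ in $C([0,T];H^s)$ and remains close to $\mathcal{K}$.

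Then I would pass to the limit. A difference estimate in the low norm $L^2$ shows $(\myvec{u}^{(k)})$ is Cauchy in $C([0,T];L^2)$; interpolating with the uniform $H^s$ bound gives convergence in $C([0,T];H^{s'})$ for every $s'<s$, which, since $s>1+\tfrac{d}{2}$, already suffices to pass to the limit in the quasilinear system, to obtain a solution $\myvec{u}$ with $\myvec{u}-\myvec{u}_0\in C([0,T];H^{s'})\cap C^1([0,T];H^{s'-1})$, and hence $\myvec{u}\in C^1(\R^d\times[0,T];\mathcal{U})$ by Sobolev embedding. Upgrading to $\myvec{u}-\myvec{u}_0\in C([0,T];H^s)\cap C^1([0,T];H^{s-1})$ requires the two standard extra ingredients of weak-$*$ continuity plus continuity of the $H^s$ norm (the Bona--Smith argument, or an abstract result on symmetric-hyperbolic evolution), and this is the step I expect to be the most delicate technically. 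Finally, uniqueness follows from an $L^2$ energy estimate for the difference of two solutions combined with Gronwall; since both solutions are $C^1$, every manipulation is justified, and the equivalence of the conservation-law form \eqref{eq:system-of-balance-laws} with the quasilinear form \eqref{eq:quasilinear-system} for $C^1$ fields closes the argument.
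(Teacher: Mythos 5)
Your proposal is correct and is essentially the standard symmetric-hyperbolic local existence argument (localization, linear iteration with the symmetrizer, Moser-type energy estimates, low-norm contraction plus Bona--Smith to recover continuity in $H^s$, and an $L^2$ Gronwall estimate for uniqueness), which is exactly the content of Theorems 10.1 and 13.1 of \cite{benzoni-serre:multi} that the paper simply cites for its proof. So you have reproduced the proof the paper delegates to the literature, and no gap is apparent.
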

\begin{proof}
  See Theorem 10.1 
  and Theorem 13.1 in \cite{benzoni-serre:multi}.
\end{proof}
%

To apply Theorem~\ref{th:short-time-well-posedness} to \eqref{eq:quasilinear-system}, one can show that the 
quasilinear system is Friedrichs-symmetrizable using a \emph{mathematical entropy} $s$:
\begin{theorem}[Godunov--Mock]
  \label{th:godunov-mock}
  Assume that $\mathcal{U}$ is convex and that the system of balance laws \eqref{eq:system-of-balance-laws} implies the existence of an additional balance law 
  \begin{equation}
    \label{eq:mathematical-entropy-balance}
    \pd{s(\myvec{u})}{t} + \sum_{j=1}^d \pd{F^j(\myvec{u})}{x^j} = b(\myvec{u}),
  \end{equation}
  where $s : \mathcal{U} \to \R$, $F^j : \mathcal{U} \to \R$, $j \in \{1, \dots, d\}$, and $b : \mathcal{U} \to \R$ are smooth functions and $s$ is \emph{strictly convex}. Then, the associated quasilinear system \eqref{eq:quasilinear-system} is Friedrichs-symmetrizable.
\end{theorem}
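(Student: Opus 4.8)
The plan is to run the classical Godunov--Mock argument, exhibiting the symmetrizer explicitly as the Hessian of the mathematical entropy $s$.

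First I would convert the hypothesis ``\eqref{eq:system-of-balance-laws} implies \eqref{eq:mathematical-entropy-balance}'' into pointwise \emph{entropy--flux compatibility relations}. Reading the implication at the level of the equations, I substitute $\partial_t \myvec{u} = - \sum_{j=1}^d \tensorq{A}^j(\myvec{u}) \partial_j \myvec{u} + \myvec{c}(\myvec{u})$ into $\partial_t s(\myvec{u}) + \sum_{j} \partial_j F^j(\myvec{u}) - b(\myvec{u})$ and apply the chain rule, obtaining
\[
  \sum_{j=1}^d \left( \nabla F^j(\myvec{u}) - \left(\tensorq{A}^j(\myvec{u})\right)^{\top}\nabla s(\myvec{u}) \right)\cdot \partial_j \myvec{u} \;+\; \nabla s(\myvec{u})\cdot\myvec{c}(\myvec{u}) - b(\myvec{u}).
\]
After eliminating $\partial_t\myvec{u}$ this is an affine function of the ``independent'' slots $(\myvec{u},\partial_1\myvec{u},\dots,\partial_d\myvec{u})$, with coefficients depending on $\myvec{u}$, that vanishes identically; hence at each $\myvec{u}\in\mathcal{U}$ every coefficient vanishes separately. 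This yields, for every $j$ and every $\myvec{u}\in\mathcal{U}$,
\[
  \nabla F^j(\myvec{u}) = \left(\tensorq{A}^j(\myvec{u})\right)^{\top}\nabla s(\myvec{u}),
\]
together with $b = \nabla s\cdot\myvec{c}$, which is not needed in the sequel.

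Next I set $\tensorq{S}(\myvec{u}) \defeq D^2 s(\myvec{u})$, the Hessian of the entropy. Since $s$ is smooth, $\tensorq{S}$ is a $C^\infty$ map $\mathcal{U}\to\R^{n\times n}$, and since $s$ is strictly convex (in the standard sense that $D^2 s$ is positive-definite throughout $\mathcal{U}$), $\tensorq{S}(\myvec{u})$ is symmetric positive-definite for each $\myvec{u}$. It remains to verify that $\tensorq{S}(\myvec{u})\tensorq{A}^j(\myvec{u})$ is symmetric. Writing $\left(\tensorq{A}^j\right)_{kl} = \partial_l f^j_k$, the compatibility relation reads $\partial_l F^j = \sum_k (\partial_l f^j_k)(\partial_k s)$; differentiating with respect to $u_m$ gives
\[
  \partial_m\partial_l F^j = \sum_k (\partial_m\partial_l f^j_k)(\partial_k s) + \sum_k (\partial_l f^j_k)(\partial_m\partial_k s).
\]
The left-hand side and the first sum are symmetric under $m\leftrightarrow l$ by equality of mixed partial derivatives, hence so is the second sum, which is exactly $\left[\tensorq{S}(\myvec{u})\tensorq{A}^j(\myvec{u})\right]_{ml}$. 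Thus $\tensorq{S}\tensorq{A}^j$ is symmetric, $\tensorq{S}$ satisfies all the requirements on the symmetrizer in Theorem~\ref{th:short-time-well-posedness}, and \eqref{eq:quasilinear-system} is Friedrichs-symmetrizable.

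The step I expect to need the most care is the first one: justifying that the (somewhat informally phrased) implication between \eqref{eq:system-of-balance-laws} and \eqref{eq:mathematical-entropy-balance} licenses separating the coefficients above, i.e.\ that the implication is to be understood as an algebraic identity once $\partial_t\myvec{u}$ is substituted out, with $\myvec{u}$ and its spatial gradients treated as free. Everything past that is the elementary symmetry computation; convexity of $\mathcal{U}$ enters only to make the notion of strict convexity of $s$ globally coherent, and concretely all we use is that $D^2 s(\myvec{u})$ is symmetric positive-definite on all of $\mathcal{U}$.
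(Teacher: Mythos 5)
Your proof is correct, and it is essentially the classical argument that the paper itself does not spell out: the paper's ``proof'' of Theorem~\ref{th:godunov-mock} is only a citation of Friedrichs--Lax and Godunov, so you have in effect reproduced the content of those references. Your two steps --- (i) reading the hypothesis as the pointwise compatibility relations $\nabla F^j=\transpose{(\tensorq{A}^j)}\nabla s$, obtained by substituting out $\partial_t\myvec{u}$ and equating the coefficients of the spatial gradients, and (ii) taking $\tensorq{S}=D^2 s$ and differentiating the compatibility relation once more to conclude that $[\tensorq{S}\tensorq{A}^j]_{ml}=\sum_k(\partial_m\partial_k s)(\partial_l f^j_k)$ is symmetric --- are precisely the Friedrichs--Lax Hessian symmetrization; Godunov's route via the Legendre-conjugate (entropy) variables $\myvec{v}=\nabla s(\myvec{u})$ is an equivalent reformulation of the same computation, and the relation $b=\nabla s\cdot\myvec{c}$ you discard is indeed irrelevant to symmetrizability. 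The only caveat worth recording is terminological: strict convexity of a smooth $s$ by itself yields only $D^2 s\ge 0$ (think of $x\mapsto x^4$ at the origin), so for $\tensorq{S}(\myvec{u})$ to be positive-definite you must read ``strictly convex'' in the sense customary for this theorem, namely $D^2 s(\myvec{u})>0$ on all of $\mathcal{U}$; this is also how the paper uses the notion, since Theorems~\ref{th:strict-convexity} and~\ref{th:pg} verify convexity through positive-definite Hessians (Sylvester's criterion). With that reading your treatment of the hypothesis as an algebraic identity in $(\myvec{u},\partial_1\myvec{u},\dots,\partial_d\myvec{u})$ is the standard one, and the argument is complete.
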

\begin{proof}
  See \cite{friedrichs-lax:systems}, or \cite{godunov:an-interesting}, \cite{godunov:elements}, and \cite{godunov:lois}.
\end{proof}

The quantity $\density E$ 
is a natural candidate for the mathematical entropy of 
\eqref{eq:balance-laws} because 
\eqref{eq:balance-of-total-energy} has the conservation form
\begin{equation}
  \label{eq:balance-law-total-energy}
  \pd{\left( \density E \right)}{t}
  +
  \divergence \left( \density E \vecv - \transpose{\cstress} \vecv + \efluxc \right)
  =
  \density \vectordot{\myvec{f}}{\vecv}
\end{equation}
with, by virtue of \eqref{eq:total-energy} and \eqref{eq:ienergy}, a specific total energy that reads
\begin{multline}
  \label{eq:total-energy-F-A}
  E
  =
  \frac{1}{2} \absnorm{\vecv}^2
  +
  \ienergy_{\mathrm{s}}
  \left( 
    \density, 
    \entropy + \frac{\alpha}{2} \left( K_1 \tr \left( \fgrad \tensorq{A} \transpose{\fgrad} \right) - \kboltzmann \log \det \left( \fgrad \tensorq{A} \transpose{\fgrad} \right) \right) 
  \right)
  \\
  +
  \frac{\tau_0}{2 \kappa} \absnorm{\efluxc}^2
  + 
  \frac{\alpha}{2} K_0 \tr \left( \fgrad \tensorq{A} \transpose{\fgrad} \right) \,.
\end{multline}

But 
$\density E$ is not strictly convex in $\myvec{u}$.
Precisely, 
$\density E$ is strictly convex with respect to $(\density, \density \entropy, \density \efluxc, \density \vecv, \density \fgrad, \density \tensorq{A})$ if and only if the function $E$ is strictly convex with respect to $(\density^{-1}, \entropy, \efluxc, \vecv, \fgrad, \tensorq{A})$, see \cite{wagner:symm} or \cite{bouchut:nonlinear}. 
However, it can be easily shown that the last term in \eqref{eq:total-energy-F-A} is \emph{not} strictly convex with respect to $(\fgrad, \tensorq{A})$. 
The situation is similar to the elastodynamics of 
hyperelastic materials see e.g. \cite{wagner:symm}:
to establish well-posedness, 
we are led to similarly use \emph{polyconvexity} and rewrite our system 
with additional independent 
quantities.

\subsection{Governing equations as a symmetrizable system} 
\label{sec:governing-equations-symmetrizable-system-balance-laws}

Let us define a symmetric positive-definite tensor $\tensorq{Y}$ via the formula
\begin{equation}
    \label{eq:Y-definition}
    \tensorq{Y} 
    \defeq 
    \tensorq{A}^{-2}.
\end{equation}
As long as $\tensorq{Y}$ and $\tensorq{A}$ remain symmetric positive-definite,
$\tensorq{Y}$ is equivalently defined by the evolution equation \eqref{eq:A-evolution-equation} for  $\tensorq{A}$
and \eqref{eq:Y-definition} as follows
\begin{equation}
    \label{eq:Y-evolution-equation}
    \dd{\tensorq{Y}}{t}
    =
    \frac{8 K(\temp)}{\dragcoef}
    \tensorq{Y}
    -
    \frac{4 \kboltzmann \temp}{\dragcoef} 
    \tensorq{Y} \left( \inverse{\fgrad} \fgrad^{-\top} \tensorq{Y}^{-\frac{1}{2}} + \tensorq{Y}^{-\frac{1}{2}} \inverse{\fgrad} \fgrad^{-\top} \right) \tensorq{Y} \defeq \tensorf{f}.
\end{equation}
Moreover, recalling the additional conservation law \eqref{eq:total-energy-F-A},
we also introduce the scalar quantity $\detY\defeq\det\inverse{\tensorq{Y}}$ as an independent variable, with evolution equation
\begin{equation}
  \label{eq:detY-evolution-equation}
  \dd{\detY}{t}
  =
    \frac{4}{\dragcoef} \detY
    \tr\left(
\kboltzmann \temp \left( \inverse{\fgrad} \fgrad^{-\top} \tensorq{Y}^{-\frac{1}{2}} + \tensorq{Y}^{-\frac{1}{2}} \inverse{\fgrad} \fgrad^{-\top} \right) \tensorq{Y}
    - 2 K(\temp) \identity \right) \defeq h 
\end{equation}
compatible with the evolution equations for $\tensorq{A}$ or $\tensorq{Y}$ 
when $\detY>0$.

The thermomechanical evolution of compressible Maxell fluids is governed by a system 
for the new set of unknowns $(\density, \density \entropy, \density \efluxc, \density \vecv, \density \fgrad, \density \tensorq{Y}, \density \detY)$:
\begin{subequations}
  \label{eq:symmetric-balance-laws}
  \begin{align}
    \label{eq:symmetric-balance-laws-density}
    \pd{\density}{t} + \divergence \left( \density \vecv \right) 
    &= 
    0,
    \\
    \label{eq:symmetric-balance-laws-entropy}
    \pd{\left( \density \entropy \right)}{t} + \divergence \left( \density \entropy \vecv + \frac{\efluxc}{\temp} \right)
    &=
    \entprodc,  
    \\
    \label{eq:symmetric-balance-laws-eflux}
    \pd{\left( \density \efluxc \right)}{t}
    +
    \divergence
    \left(
      \density \efluxc \otimes \vecv
      +
      \frac{\kappa \log\temp}{\tau_0} \identity
    \right)
    &=
    -
    \frac{\efluxc}{\tau_0 \temp}
    \\
    \label{eq:symmetric-balance-laws-linear-momentum}
    \pd{\left( \density \vecv \right)}{t} 
    + 
    \divergence 
    \left(
      \density \vecv \otimes \vecv - \cstress
    \right) 
    &= 
    \density \myvec{f},
    \\
    \label{eq:symmetric-balance-laws-fgrad}
    \pd{\left( \density \fgrad \right)}{t} 
    + 
    \divergence 
    \left( 
      \density \left( \fgrad \otimes \vecv - \vecv \otimes \transpose{\fgrad} \right) 
    \right)
    &=
    \tensorq{0},
    \\
    \label{eq:symmetric-balance-laws-tensor-Y}
    \pd{\left( \density \tensorq{Y} \right)}{t} 
    + 
    \divergence \left( \density \tensorq{Y} \otimes \vecv \right) 
    &= 
    \density \tensorf{f},
    \\
    \label{eq:symmetric-balance-laws-det-Y}
    \pd{\left( \density \detY 
    \right)}{t} 
    +
    \divergence \left( \density \detY \vecv \right) 
    &=
    \density h.
  \end{align}
\end{subequations}
where the entropy production $\entprodc$, the temperature $\temp$, the Cauchy stress tensor $\cstress$, 
and the quantities $\tensorf{f}$, $h$ are given 
by equations \eqref{eq:entprod}, \eqref{eq:temperature}, \eqref{eq:maxwell-cstress}, 
\eqref{eq:Y-evolution-equation}, \eqref{eq:detY-evolution-equation}
as functions of the unknowns $(\density, \density \entropy, \density \efluxc, \density \vecv, \density \fgrad, \density \tensorq{Y}, \density \detY)$.

It remains to identify a mathematical entropy for 
\eqref{eq:symmetric-balance-laws}. 
Denoting $\ienergyref$ a constant reference value of the internal energy,
we propose $\density \widetilde{E}$ where
\begin{equation}
  \label{eq:mathematical-entropy-definition}
  \widetilde{E}
  \defeq
  \frac{1}{2} \absnorm{\vecv}^2
  +
  \widetilde{\ienergy_{\mathrm{s}}} (\density, \entropy, \fgrad, \tensorq{Y}, \detY)
  +
  \frac{\tau_0}{2 \kappa} \absnorm{\efluxc}^2
  + 
  \frac{\alpha}{2} K_0 \tr \left( \fgrad \tensorq{Y}^{-\frac{1}{2}} \transpose{\fgrad} \right)
  +
  \frac{\ienergyref}{2}  \absnorm{\tensorq{Y}}^2,
\end{equation}
and where $\widetilde{\ienergy_{\mathrm{s}}}$ is defined---exploiting the relation \eqref{eq:rhor}---as
\begin{multline}
  \label{eq:ienergy-s-tilde}
  \widetilde{\ienergy_{\mathrm{s}}}
  (\density, \entropy, \fgrad, \tensorq{Y}, \detY)
  \defeq
  \\
  \ienergy_{\mathrm{s}}
  \left( 
    \density, 
    \entropy 
    + 
    \frac{\alpha}{2} \left( K_1 \tr \left( \fgrad \tensorq{Y}^{-\frac{1}{2}} \transpose{\fgrad} \right) 
    - 
    \kboltzmann \log \left( \left( \frac{\rhor}{\density} \right)^2 \detY^{\frac{1}{2}} \right) \right) 
  \right).
\end{multline}
Note that by virtue of \eqref{eq:total-energy-F-A} and \eqref{eq:Y-definition} it holds
\begin{equation}
  \label{eq:mathematical-entropy-reformulation}
  \widetilde{E}
  =
  E
  +
  \frac{\ienergyref}{2} \absnorm{\tensorq{Y}}^2 
\end{equation}
and thus, using the balance of the total energy \eqref{eq:balance-law-total-energy} we obtain
\begin{equation}
  \label{eq:mathematical-entropy-evolution-equation-2}
  \pd{( \density \widetilde{E} )}{t}
  +
  \divergence \left( \density \widetilde{E} \vecv - \transpose{\cstress} \vecv + \efluxc \right)
  =
  \density \vectordot{\myvec{f}}{\vecv}
  +
  \density \ienergyref 
    \tensordot{
      \tensorq{Y}
    }{  
      \tensorf{f}
    },
\end{equation}
where the right-hand side of \eqref{eq:mathematical-entropy-evolution-equation-2} is an algebraic function of the unknowns $(\density, \density \entropy, \density \efluxc, \density \vecv, \density \fgrad, \density \tensorq{Y}, \density \detY)$.

\begin{theorem}
  \label{th:strict-convexity}
  Assume that the volumetric energy functional 
  $\ienergy_{\mathrm{s}} = \ienergy_{\mathrm{s}}(\density, \entropy)$ 
  is chosen so that the function $\widetilde{\ienergy_{\mathrm{s}}}$ defined by \eqref{eq:ienergy-s-tilde} is convex with respect to $(\inverse{\density}, \entropy, \fgrad, \tensorq{Y},\detY)$ 
  and strictly convex with respect to $(\inverse{\density}, \entropy,\detY)$. 
  
  Then, the scalar quantity $\density \widetilde{E}$, where $\widetilde{E}$ is given by the formula \eqref{eq:mathematical-entropy-definition}, is strictly convex with respect to $(\density, \density \entropy, \density \efluxc, \density \vecv, \density \fgrad, \density \tensorq{Y}, \density \detY)$.
\end{theorem}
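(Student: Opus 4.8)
The plan is to reduce the claim to the strict convexity of the \emph{specific} energy $\widetilde E$ in the per-unit-mass variables, and then to write $\widetilde E$ as a sum of convex functions whose strictly convex directions jointly span all of the unknowns. For the reduction I would invoke, exactly as recalled right after \eqref{eq:total-energy-F-A}, the homogeneity (``perspective'') lemma of \cite{wagner:symm,bouchut:nonlinear}: $\density\widetilde E$ is strictly convex with respect to $(\density,\density\entropy,\density\efluxc,\density\vecv,\density\fgrad,\density\tensorq Y,\density\detY)$ if and only if $\widetilde E$ is strictly convex with respect to $(\inverse\density,\entropy,\efluxc,\vecv,\fgrad,\tensorq Y,\detY)$. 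It therefore suffices to prove this latter statement on $\mathcal U$, which we take inside $\{\density>0,\ \detY>0,\ \tensorq Y\succ0\}$ so that $\tensorq Y^{-\frac12}$ is well defined and positive-definite throughout.

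Next I would split $\widetilde E$, as given by \eqref{eq:mathematical-entropy-definition}, into the five summands $\tfrac12\absnorm{\vecv}^2$, $\tfrac{\tau_0}{2\kappa}\absnorm{\efluxc}^2$, $\widetilde{\ienergy_{\mathrm{s}}}(\density,\entropy,\fgrad,\tensorq Y,\detY)$, $\tfrac{\alpha K_0}{2}\tr(\fgrad\tensorq Y^{-\frac12}\transpose\fgrad)$ and $\tfrac{\ienergyref}{2}\absnorm{\tensorq Y}^2$, each regarded as a function of all seven variables. The first, second and fifth are quadratic with constant positive-semidefinite Hessians (recall $\tau_0,\kappa,\ienergyref>0$), strictly convex respectively along $\vecv$-, $\efluxc$- and $\tensorq Y$-perturbations; the third is convex by hypothesis (it involves neither $\efluxc$ nor $\vecv$) and, again by hypothesis, strictly convex along $(\inverse\density,\entropy,\detY)$-perturbations. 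The only term needing a real argument is the fourth: here I would invoke the (classical) joint convexity of the matrix-fractional function $(\fgrad,\tensorq Z)\mapsto\tr(\fgrad\inverse{\tensorq Z}\transpose\fgrad)$ on $\R^{3\times3}\times\{\tensorq Z\succ0\}$, which is moreover non-increasing in $\tensorq Z$ for the Loewner order since $\transpose\fgrad\fgrad\succeq0$; composing it with the operator-concave square-root $\tensorq Y\mapsto\tensorq Y^{\frac12}$ --- and a convex function that is monotone non-increasing in its matrix argument stays convex when that argument is replaced by a concave map --- yields joint convexity of $(\fgrad,\tensorq Y)\mapsto\tr(\fgrad\tensorq Y^{-\frac12}\transpose\fgrad)$. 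Strict convexity of this term along $\fgrad$-perturbations is then immediate, since for fixed $\tensorq Y\succ0$ it is a positive-definite quadratic form in $\fgrad$.

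With the decomposition in hand, $\widetilde E$ is a sum of convex functions, hence convex, and it remains only to check that its Hessian is positive definite, i.e. that every nonzero perturbation direction $\myvec d$ makes at least one summand strictly convex. I would argue by cases on $\myvec d$: if its $\vecv$-component is nonzero the kinetic term does it; otherwise, if its $\efluxc$-component is nonzero, the $\absnorm{\efluxc}^2$-term does; otherwise, if its $\tensorq Y$-component is nonzero, the $\tfrac{\ienergyref}{2}\absnorm{\tensorq Y}^2$-term does; otherwise, if its $\fgrad$-component $\delta\fgrad$ is nonzero, the $K_0$-term does, because with vanishing $\tensorq Y$-component its second variation along $\myvec d$ is a positive multiple of $\tr(\delta\fgrad\,\tensorq Y^{-\frac12}\transpose{\delta\fgrad})>0$; and in the remaining case $\myvec d$ is supported in $(\inverse\density,\entropy,\detY)$, where $\widetilde{\ienergy_{\mathrm{s}}}$ is strictly convex by hypothesis. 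Hence $\widetilde E$ is strictly convex in the per-unit-mass variables, and the reduction step upgrades this to strict convexity of $\density\widetilde E$ in the conservative variables, as claimed.

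The only genuinely non-trivial ingredient should be the joint convexity of the $K_0$-elastic term $\tr(\fgrad\tensorq Y^{-\frac12}\transpose\fgrad)$ in $(\fgrad,\tensorq Y)$ --- this is precisely what forces the auxiliary tensor to be (a negative power of) $\tensorq A^2$ rather than $\tensorq A$ itself, and why $\detY=\det\inverse{\tensorq Y}$ is carried along as a separate unknown. It is worth emphasizing that the hypotheses on $\ienergy_{\mathrm{s}}$ are used economically: $\widetilde{\ienergy_{\mathrm{s}}}$ is required to be only \emph{weakly} convex along $\fgrad$- and $\tensorq Y$-perturbations, the resulting degeneracies being repaired exactly by the $K_0$-term and by the $\tfrac{\ienergyref}{2}\absnorm{\tensorq Y}^2$-term; everything else in the argument is bookkeeping.
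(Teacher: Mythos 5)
Your proposal is correct and follows essentially the same route as the paper: reduce via the Wagner--Bouchut equivalence to strict convexity of $\widetilde{E}$ in $(\inverse{\density}, \entropy, \efluxc, \vecv, \fgrad, \tensorq{Y}, \detY)$, use joint convexity of $(\fgrad,\tensorq{Y})\mapsto\tr\left(\fgrad\tensorq{Y}^{-\frac{1}{2}}\transpose{\fgrad}\right)$ together with its strict convexity in $\fgrad$, repair the degenerate directions with $\frac{\ienergyref}{2}\absnorm{\tensorq{Y}}^2$, the quadratic $\vecv$- and $\efluxc$-terms, and the hypothesis on $\widetilde{\ienergy_{\mathrm{s}}}$. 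The one genuine difference is that where the paper simply cites \cite{lieb:convex} for the convexity of the elastic trace term, you derive it yourself by composing the matrix-fractional function $(\fgrad,\tensorq{Z})\mapsto\tr\left(\fgrad\inverse{\tensorq{Z}}\transpose{\fgrad}\right)$ (jointly convex, Loewner-non-increasing in $\tensorq{Z}$) with the operator-concave square root; that composition argument is valid and makes the key lemma self-contained, at the price of invoking two classical matrix-analysis facts instead of one reference. One small caution: phrase the final combination via the definition of convexity on segments rather than ``the Hessian is positive definite'' --- the hypothesis that $\widetilde{\ienergy_{\mathrm{s}}}$ is strictly convex in $(\inverse{\density},\entropy,\detY)$ does not by itself give a positive-definite Hessian in those variables (strict convexity permits degenerate second variations), but your case-by-case argument goes through verbatim when each case is settled by strict inequality of one summand along the chord, which is exactly how the paper concludes.
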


\begin{proof}
  We already know that the function $\density \widetilde{E}$ is strictly convex with respect to $(\density, \density \entropy, \density \efluxc, \density \vecv, \density \fgrad, \density \tensorq{Y}, \density \detY)$ if and only if the function $\widetilde{E}$ is strictly convex with respect to $(\density^{-1}, \entropy, \efluxc, \vecv, \fgrad, \tensorq{Z}, \tensorq{Y}, \detY)$, see \cite{wagner:symm} or \cite{bouchut:nonlinear}. 
  Next, note that the function $\tr ( \fgrad \tensorq{Y}^{-\frac{1}{2}} \transpose{\fgrad} )$ is convex with respect to $(\fgrad, \tensorq{Y})$, see \cite{lieb:convex}, and strictly convex with respect to $\fgrad$. As a consequence, the sum
  \begin{equation}
    \label{eq:p1}
    \frac{\alpha}{2} K_0 \tr \left( \fgrad \tensorq{Y}^{-\frac{1}{2}} \transpose{\fgrad} \right)
    +
    \frac{\ienergyref}{2}        \absnorm{\tensorq{Y}}^2
  \end{equation}
  is strictly convex with respect to $(\fgrad, \tensorq{Y})$. 
  Exploiting the assumption about the function $\widetilde{\ienergy_{\mathrm{s}}}$ we then obtain that the sum
  \begin{equation}
    \label{eq:p2}
    \widetilde{\ienergy_{\mathrm{s}}} (\density, \entropy, \fgrad, \tensorq{Y}, \detY)
    + 
    \frac{\alpha}{2} K_0 \tr \left( \fgrad \tensorq{Y}^{-\frac{1}{2}} \transpose{\fgrad} \right)
    +
    \frac{ \ienergyref}{2} 
      \absnorm{\tensorq{Y}}^2
  \end{equation}
  is strictly convex with respect to $(\inverse{\density}, \entropy, \fgrad, \tensorq{Y}, \detY)$. 
  Further, the terms $\frac{1}{2} \absnorm{\vecv}^2$ and $\frac{\tau_0}{2 \kappa} \absnorm{\efluxc}^2$ are strictly convex with respect to $\vecv$ and $\efluxc$, respectively. 
  Trivially, these terms as well as the expression \eqref{eq:p2} are also convex functions with respect to $(\density^{-1}, \entropy, \efluxc, \vecv, \fgrad, \tensorq{Y}, \detY)$. 
  Consequently, $\widetilde{E}$ is strictly convex with respect to $(\density^{-1}, \entropy, \efluxc, \vecv, \fgrad, \tensorq{Y}, \detY)$.
\end{proof}

We are now ready to formulate our first result.
\begin{theorem}
  \label{th:pg}
  Let the 
  volumetric term of the energy be described by the polytropic gas equation of state
  \begin{equation}
    \label{eq:pg-ienergy}
    \ienergy_{\mathrm{s}}(\density, \entropy)
    =
    \cheatvolsol \tempref \left( \frac{\density}{\densityref} \right)^{\gamma - 1} 
    \exp \left( \frac{\entropy}{\cheatvolsol} \right),
  \end{equation}
  where $\cheatvolsol$ denotes a specific heat capacity at constant volume, 
  $\gamma > 1$ is a constant referred to as the adiabatic exponent,
  and $\tempref,\densityref$ denote a constant reference temperature and a constant reference density, respectively,
  which can be chosen at will as they serve for normalization purposes only.
  
  Then, the scalar quantity $\density \widetilde{E}$, where $\widetilde{E}$ is given by the formula \eqref{eq:mathematical-entropy-definition}, is a mathematical entropy of the system of balance laws \eqref{eq:symmetric-balance-laws}.
\end{theorem}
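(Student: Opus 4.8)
The plan is to deduce the claim from Theorem~\ref{th:strict-convexity}, whose hypotheses are the only thing left to check: the additional balance law required of a mathematical entropy is already available as \eqref{eq:mathematical-entropy-evolution-equation-2}, with flux $\density\widetilde{E}\vecv-\transpose{\cstress}\vecv+\efluxc$ and source $\density\vectordot{\myvec{f}}{\vecv}+\density\ienergyref\tensordot{\tensorq{Y}}{\tensorf{f}}$, both smooth functions of the conservative unknowns on the set of admissible states ($\density>0$, $\fgrad$ and $\tensorq{Y}$ invertible with $\tensorq{Y}$ positive-definite, $\detY>0$). Hence it suffices to verify, for the polytropic choice \eqref{eq:pg-ienergy}, that $\widetilde{\ienergy_{\mathrm{s}}}$ from \eqref{eq:ienergy-s-tilde} is convex in $(\inverse\density,\entropy,\fgrad,\tensorq{Y},\detY)$ and strictly convex in $(\inverse\density,\entropy,\detY)$.

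First I would insert \eqref{eq:pg-ienergy} into \eqref{eq:ienergy-s-tilde} and collect the powers of $\density$ and $\detY$ produced by the logarithmic term, obtaining $\widetilde{\ienergy_{\mathrm{s}}}=c\,\exp(G)$ with $c>0$ a constant gathering $\cheatvolsol,\tempref,\densityref,\rhor$ and
\begin{equation*}
  G=-\Big(\gamma-1+\frac{\alpha\kboltzmann}{\cheatvolsol}\Big)\log\inverse\density+\frac{\entropy}{\cheatvolsol}+\frac{\alpha K_1}{2\cheatvolsol}\tr\big(\fgrad\tensorq{Y}^{-\frac{1}{2}}\transpose{\fgrad}\big)-\frac{\alpha\kboltzmann}{4\cheatvolsol}\log\detY.
\end{equation*}
Since $\gamma>1$ and all material constants are positive, the coefficients $\gamma-1+\alpha\kboltzmann/\cheatvolsol$ and $\alpha\kboltzmann/(4\cheatvolsol)$ are positive, so the two $-\log(\cdot)$ terms are convex; the $\entropy$ term is affine; and $\tr(\fgrad\tensorq{Y}^{-\frac{1}{2}}\transpose{\fgrad})$ is convex in $(\fgrad,\tensorq{Y})$ by \cite{lieb:convex}. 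Thus $G$ is convex, and since $t\mapsto e^{t}$ is convex and increasing, $\widetilde{\ienergy_{\mathrm{s}}}=c\exp(G)$ is convex in $(\inverse\density,\entropy,\fgrad,\tensorq{Y},\detY)$: the first hypothesis of Theorem~\ref{th:strict-convexity} holds.

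For the second hypothesis I would restrict to $(\inverse\density,\entropy,\detY)$ and use that the Hessian of $c\exp(G)$ equals $\widetilde{\ienergy_{\mathrm{s}}}\,(\nabla^{2}G+\nabla G\,(\nabla G)^{\top})$, gradients taken in those three variables. On this subspace $\nabla^{2}G$ is diagonal with strictly positive entries in the $\inverse\density$- and $\detY$-directions and a zero entry in the $\entropy$-direction (the $\entropy$-dependence of $G$ being affine)---this is exactly where convexity of $e^{G}$ fails to be strict---while the rank-one term $\nabla G\,(\nabla G)^{\top}$ repairs it, its $\entropy$-diagonal contribution being $(\partial_\entropy G)^{2}=\cheatvolsol^{-2}>0$. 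Concretely, for $v=(v_{1},v_{2},v_{3})\neq 0$ the quadratic form is the sum of the two positive diagonal contributions of $\nabla^{2}G$ and of $(\nabla G\cdot v)^{2}$; if the former two vanish then $v_{1}=v_{3}=0$ and $(\nabla G\cdot v)^{2}=v_{2}^{2}/\cheatvolsol^{2}>0$. Hence $\widetilde{\ienergy_{\mathrm{s}}}$ is strictly convex in $(\inverse\density,\entropy,\detY)$, so Theorem~\ref{th:strict-convexity} yields strict convexity of $\density\widetilde{E}$ in $(\density,\density\entropy,\density\efluxc,\density\vecv,\density\fgrad,\density\tensorq{Y},\density\detY)$; together with \eqref{eq:mathematical-entropy-evolution-equation-2} this is precisely the statement that $\density\widetilde{E}$ is a mathematical entropy of \eqref{eq:symmetric-balance-laws}, in the sense of Theorem~\ref{th:godunov-mock}.

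The one genuinely delicate step is the strict-convexity verification: because $\nabla^{2}G$ degenerates along $\entropy$, strictness cannot be read off from convexity of $G$ alone, and the exponential form of the polytropic equation of state---through the rank-one correction in the Hessian of $c\exp(G)$---is essential. Everything else is routine: checking the signs of the collected exponents and invoking, via Theorem~\ref{th:strict-convexity}, the already-established convexity facts (the Lieb inequality for $\tr(\fgrad\tensorq{Y}^{-\frac{1}{2}}\transpose{\fgrad})$ and the strict convexity of the quadratic terms in $\vecv$, $\efluxc$ and $\tensorq{Y}$).
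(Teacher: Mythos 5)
Your proposal is correct and follows essentially the same route as the paper: reduce to Theorem~\ref{th:strict-convexity} via the balance law \eqref{eq:mathematical-entropy-evolution-equation-2}, obtain the explicit exponential formula \eqref{eq:pg-ienergy-tilde}, prove convexity in all variables by composing the convex exponent (Lieb's convexity of the trace term) with the increasing convex exponential via Lemma~\ref{lem:1}, and prove strict convexity in $(\inverse{\density},\entropy,\detY)$ by positive-definiteness of the Hessian. The only cosmetic difference is in that last verification: the paper computes the leading principal minors of the auxiliary function $f(x,y,z)=\exponential{qy}/(x^{p}z^{r})$ and invokes Sylvester's criterion, whereas you use the decomposition of the Hessian of $c\,\exponential{G}$ into $\nabla^{2}G$ plus the rank-one term $\nabla G\,\transpose{(\nabla G)}$ --- the same computation, organized slightly differently.
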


To prove Theorem \ref{th:pg}, it is useful to recall an elementary fact from convex optimization 
see e.g. \cite[Theorem 2.3.10]{peressini-sullivan-uhl:the}.
\begin{lemma}
  \label{lem:1}
  Let $X \subset \R^n$, $n \in \N$ be a convex set and let $g : X \to \R$ be convex (resp. strictly convex) and $f : \R \to \R$ be non-decreasing (resp. increasing) and convex. Then, $f \circ g$ is convex (resp. strictly convex).
\end{lemma}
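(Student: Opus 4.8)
The plan is to verify the convexity inequality directly, chaining the two hypotheses in the correct order. First I would fix arbitrary $x, y \in X$ and $\lambda \in [0,1]$; since $X$ is convex, $\lambda x + (1-\lambda) y \in X$, so $(f \circ g)(\lambda x + (1-\lambda) y)$ is well-defined, and the intermediate value $g(\lambda x + (1-\lambda) y)$ lies in $\R$, the domain of $f$. This is the only point where convexity of $X$ is needed, and it guarantees the composite is defined on all of the relevant segment.

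For the (non-strict) convex case, I would produce the two-step chain
\[
(f \circ g)(\lambda x + (1-\lambda) y)
\leq
f\left( \lambda g(x) + (1-\lambda) g(y) \right)
\leq
\lambda (f \circ g)(x) + (1-\lambda)(f \circ g)(y).
\]
The first inequality follows by applying convexity of $g$, which gives $g(\lambda x + (1-\lambda) y) \leq \lambda g(x) + (1-\lambda) g(y)$, and then using that $f$ is non-decreasing to transport this inequality through $f$. The second inequality is just convexity of $f$ applied to the two points $g(x), g(y) \in \R$. Together these establish convexity of $f \circ g$.

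For the strict case, I would take $x \neq y$ and $\lambda \in (0,1)$ and run the same chain, carefully tracking where strictness appears. Strict convexity of $g$ now yields the strict inequality $g(\lambda x + (1-\lambda) y) < \lambda g(x) + (1-\lambda) g(y)$, and since $f$ is assumed (strictly) increasing, applying $f$ preserves strictness, so the first inequality becomes strict; the second inequality needs only the non-strict convexity of $f$. Hence the overall composite inequality is strict, which is strict convexity of $f \circ g$.

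There is no genuine obstacle here; the one point worth isolating is the provenance of strictness. It enters solely through the combination of strict convexity of $g$ with the strict monotonicity of $f$, and \emph{not} through any strict convexity of $f$, which is deliberately not assumed. I would state this explicitly so that the asymmetry in the hypotheses, namely convex versus merely non-decreasing for $f$ with only the monotonicity strengthened in the strict case, is transparent to the reader.
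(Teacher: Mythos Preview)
Your proof is correct and is the standard direct verification. The paper itself does not supply a proof of this lemma at all; it simply records it as an elementary fact and cites \cite[Theorem~2.3.10]{peressini-sullivan-uhl:the}, so there is nothing further to compare.
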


\begin{proof}[of Theorem \ref{th:pg}]
  We already know that the quantity $\density \widetilde{E}$ satisfies an additional balance law, see \eqref{eq:mathematical-entropy-evolution-equation-2}. By Theorem \ref{th:strict-convexity}, to prove strict convexity of $\density \widetilde{E}$, it suffices to show that the function $\widetilde{\ienergy_{\mathrm{s}}}$ defined by \eqref{eq:ienergy-s-tilde} satisfies the assumptions of the theorem. The fundamental relation \eqref{eq:pg-ienergy} yields an explicit formula for 
  $\widetilde{\ienergy_{\mathrm{s}}}$,
  \begin{multline}
    \label{eq:pg-ienergy-tilde}
    \widetilde{\ienergy_{\mathrm{s}}}
    (\density, \entropy, \fgrad, \tensorq{Y}, \detY)
    =
    \\
    \cheatvolsol \tempref \left( \frac{\density}{\densityref} \right)^{\gamma - 1} 
    \exp 
    \left(
      \frac{\entropy}{\cheatvolsol} + \frac{\alpha K_1}{2 \cheatvolref} \tr \left( \fgrad \tensorq{Y}^{-\frac{1}{2}} \transpose{\fgrad} \right)
    \right)
    \left( \frac{\rhor}{\density} \right)^{-\frac{\alpha \kboltzmann}{\cheatvolsol}} \detY^{-\frac{\alpha \kboltzmann}{4 \cheatvolsol}}.
  \end{multline}
  
  First, we show that $\widetilde{\ienergy_{\mathrm{s}}}$ is strictly convex with respect to $(\inverse{\density}, \entropy, \detY)$. Let us define an auxiliary function
  \begin{equation}
    \label{eq:pg-f}
    f(x, y, z)
    \defeq
    \frac{\exponential{q y}}{x^p z^r},
  \end{equation}
  where $(x, y, z) \in D_{f} \defeq \{ (x, y, z) \in \left( 0, +\infty \right) \times \R \times \left( 0, +\infty \right) \}$ and $p, q, r > 0$. To prove that the function $\widetilde{\ienergy_{\mathrm{s}}}$ is strictly convex with respect to $(\density^{-1}, \entropy, \detY)$ for arbitrary values of the material parameters $\alpha$, $\cheatvolsol$, $\gamma$, $\tempref$, $\densityref$ it suffices to prove that the function $f$ is strictly convex with respect to $(x, y, z)$ for arbitrary values of the parameters $p, q, r$. Let $H_i$ denote the $i$-th leading principal minor of the Hessian matrix $\myvec{H}$ corresponding to the function $f$. An elementary calculation yields
  \begin{equation}
    \label{eq:pg-determinants}
    H_1 = \frac{p(p+1) \exponential{q y}}{x^{p+2} z^r},
    \qquad
    H_2 = \frac{q^2 p \exponential{2 q y}}{x^{2p+2} z^{2r}},
    \qquad
    H_3 = \frac{q^2 p r \exponential{3 q y}}{x^{3p+2} z^{3r + 2}},
  \end{equation}
  and we immediately see that, by Sylvester's criterion, the Hessian matrix $\myvec{H}$ is positive definite on $D_f$. Consequently, the function $f$ is strictly convex on $D_{f}$.
  
  Second, we prove that $\widetilde{\ienergy_{\mathrm{s}}}$ is convex with respect to $(\inverse{\density}, \entropy, \fgrad, \tensorq{Y}, \detY)$. Note that the formula \eqref{eq:pg-ienergy-tilde} can be conveniently written as
  \begin{multline}
      \label{eq:pg-ienergy-tilde-2}
    \widetilde{\ienergy_{\mathrm{s}}}
    (\density, \entropy, \fgrad, \tensorq{Y}, \detY)
    =
    \\
    C
    \exp
    \left(
      -
      p \log \left( \inverse{\density} \right)
      +
      q \entropy
      -
      r \log \left( \detY \right)
      +
      s \tr \left( \fgrad \tensorq{Y}^{-\frac{1}{2}} \transpose{\fgrad} \right)
    \right),      
  \end{multline}
  where $C, p, q, r, s > 0$ are positive constants whose explicit formulae can be found easily. Since the scalar function $\exp$ is non-decreasing and convex, let us only concentrate on the inner function
  \begin{equation}
    g(\density, \entropy, \fgrad, \tensorq{Y}, \detY)
    \defeq
    -
    p \log \left( \inverse{\density} \right)
    +
    q \entropy
    -
    r \log \left( \detY \right)
    +
    s \tr \left( \fgrad \tensorq{Y}^{-\frac{1}{2}} \transpose{\fgrad} \right). 
  \end{equation}
  As we already know since \cite{lieb:convex}, the function $\tr ( \fgrad \tensorq{Y}^{-\frac{1}{2}} \transpose{\fgrad} )$ is convex with respect to $(\fgrad, \tensorq{Y})$, recall the proof of Th. \ref{th:strict-convexity},
  so $g$ is clearly convex with respect to $(\inverse{\density}, \entropy, \fgrad, \tensorq{Y}, \detY)$. Employing Lemma \ref{lem:1} we then conclude that $\widetilde{\ienergy_{\mathrm{s}}}$ is convex with respect to the unknowns $(\inverse{\density}, \entropy, \fgrad, \tensorq{Y}, \detY)$.
\end{proof}

Finally, having proved Theorem \ref{th:pg},
it is now possible to formulate a short-time well-posedness result of the Cauchy problem associated with the system 
\eqref{eq:symmetric-balance-laws} for $\myvec{u}= (\density, \density \entropy, \density \efluxc, \density \vecv, \density \fgrad, \density \tensorq{Y}, \density \detY)$, when the  volumetric specific internal energy 
is given by \emph{the polytropic gas equation of state} \eqref{eq:pg-ienergy}.
Recall Theorems \ref{th:short-time-well-posedness}\&\ref{th:godunov-mock}, 
and the conservation form \eqref{eq:system-of-balance-laws} of \eqref{eq:symmetric-balance-laws} 
using \emph{smooth} flux $\myvec{f}^j(\myvec{u})$, $j=1\ldots d$ and source $\myvec{c}(\myvec{u})$ 
well-defined for all $\myvec{u}\in \mathcal{V}$ in the convex open set of $\R^{24}$
\begin{equation}
  \label{eq:V-set}
  \mathcal{V} \defeq (0, +\infty) \times \R \times \R^3 \times \R^3 \times \R^{9} \times \R_{\mathrm{sym},>}^{6} \times (0, +\infty) \cap \{\temp(\myvec{u})>0\}
\end{equation}
where $\R_{\mathrm{sym},>}^{6}$ denotes the convex subset of $\R^6$ whose elements can be 
rearranged by convention to form $3 \times 3$ symmetric positive definite matrices.


\begin{theorem}
  \label{th:maxwell-pg-short-time-well-posedness}
  Let $\mathcal{U}$ be an open subset of the convex set $\mathcal{V}$ given by \eqref{eq:V-set}. 
  Consider the system of balance laws \eqref{eq:symmetric-balance-laws} governing the motion of compressible heat-conducting Maxwell viscoelastic fluid with a volumetric specific internal energy 
  given by the polytropic gas equation of state \eqref{eq:pg-ienergy}. 
  Assume that $\myvec{u}_0 \in \mathcal{U}$ and $\widetilde{\myvec{u}_0} \in H^s(\R^3; \BBB{\R^{30}}\AAA{\R^{24}})$ with $s > \frac{5}{2}$ such that $\myvec{u}_0 + \widetilde{\myvec{u}_0}$ is compactly supported in $\mathcal{U}$. Then, there exists $T > 0$ and a unique classical solution $\myvec{u} \in C^{1}(\R^3 \times [ 0, T]; \, \mathcal{U})$ of the Cauchy problem associated with \eqref{eq:symmetric-balance-laws} and the initial data $\myvec{u}(0) = \myvec{u}_0 + \widetilde{\myvec{u}_0}$. Furthermore, $\myvec{u} - \myvec{u}_0 \in C([0, T]; H^s) \cap C^1([0, T]; H^{s-1})$.
\end{theorem}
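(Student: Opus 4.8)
The plan is to recognise \eqref{eq:symmetric-balance-laws} as a first-order system of conservation laws of the form \eqref{eq:system-of-balance-laws}, to quote Theorem~\ref{th:pg} for a strictly convex mathematical entropy, and then to chain Theorems~\ref{th:godunov-mock} and \ref{th:short-time-well-posedness}. First I would set $\myvec{u}\defeq(\density,\density\entropy,\density\efluxc,\density\vecv,\density\fgrad,\density\tensorq{Y},\density\detY)$, so that $d=3$ and $n=1+1+3+3+9+6+1=24$, and read off from \eqref{eq:symmetric-balance-laws-density}--\eqref{eq:symmetric-balance-laws-det-Y} the fluxes $\myvec{f}^j(\myvec{u})$ and the source $\myvec{c}(\myvec{u})$ (a genuine function of $\myvec{u}$ alone once the body force $\myvec{f}$ is taken constant), with $\temp$, $p$, $\cstress$, $\entprodc$, $\tensorf{f}$, $h$ the algebraic expressions \eqref{eq:temperature}, \eqref{eq:pressure}, \eqref{eq:maxwell-cstress}, \eqref{eq:entprod}, \eqref{eq:Y-evolution-equation}, \eqref{eq:detY-evolution-equation} in the unknowns, evaluated with $\tensorq{A}=\tensorq{Y}^{-\frac{1}{2}}$ and $\ctensor=\fgrad\tensorq{Y}^{-\frac{1}{2}}\transpose{\fgrad}$; the associated quasilinear coefficients \eqref{eq:matrices} then inherit the regularity of the fluxes.

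The technical core is to pin down the open set on which $\myvec{f}^j$ and $\myvec{c}$ are $C^\infty$. Division by $\density$ to recover the primitive variables is smooth because $\density>0$ on $\mathcal{V}$; the powers $\tensorq{Y}^{\pm\frac{1}{2}}$ entering $\cstress$, $\tensorf{f}$, $h$, $\entprodc$ and the entropy shift of \eqref{eq:ienergy-s-tilde} are smooth because $\tensorq{Y}$ is symmetric positive-definite on $\mathcal{V}$; the logarithms $\log\detY$, $\log\temp$ in \eqref{eq:ienergy-s-tilde}, \eqref{eq:symmetric-balance-laws-eflux} are smooth because $\detY>0$ and because, with the polytropic-gas choice \eqref{eq:pg-ienergy}, the temperature has the explicit form $\temp=\tempref(\density/\densityref)^{\gamma-1}\exp(\widehat{\entropy}/\cheatvolsol)$ --- $\widehat{\entropy}$ the shifted entropy argument of \eqref{eq:temperature} --- which is manifestly smooth and strictly positive, so the constraint $\{\temp(\myvec{u})>0\}$ in \eqref{eq:V-set} is automatic and $\mathcal{V}$ is genuinely a convex open set (a product of convex sets, $\R_{\mathrm{sym},>}^{6}$ being convex); likewise $p=\density^2\partial_{\density}\ienergy_{\mathrm{s}}$ and $\entprodc$ are smooth. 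The one ingredient not encoded in \eqref{eq:V-set} that is really needed is invertibility of $\fgrad$: the source $\myvec{c}$ contains $\inverse{\fgrad}\fgrad^{-\top}$ (through $\tensorf{f}$, $h$) and $\inverse{\ctensor}$ (in $\entprodc$), so one should take $\mathcal{U}$ inside the open set $\{\det\fgrad\neq0\}$ --- harmless, since $\rhor=\density\det\fgrad>0$ is a conserved quantity of \eqref{eq:symmetric-balance-laws-fgrad} compatible with any physically admissible initial state. On any open $\mathcal{U}\subset\mathcal{V}\cap\{\det\fgrad\neq0\}$ the flux and source are therefore $C^\infty$.

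With the first step settled the rest is short. Theorem~\ref{th:pg} provides, for the polytropic-gas equation of state \eqref{eq:pg-ienergy}, that $\density\widetilde{E}$ built from \eqref{eq:mathematical-entropy-definition} is a mathematical entropy of \eqref{eq:symmetric-balance-laws}: it obeys the additional balance law \eqref{eq:mathematical-entropy-evolution-equation-2}, whose flux $\density\widetilde{E}\vecv-\transpose{\cstress}\vecv+\efluxc$ and source are smooth on $\mathcal{U}$ by the same checks, and it is strictly convex in $\myvec{u}$ --- the polyconvex passage to $\tensorq{Y}=\tensorq{A}^{-2}$ together with the extra term $\frac{\ienergyref}{2}\absnorm{\tensorq{Y}}^2$ being what restores the convexity lost in \eqref{eq:total-energy-F-A}. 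The construction underlying Theorem~\ref{th:godunov-mock} then furnishes, at each point of $\mathcal{U}$, the positive-definite symmetrizer $D^2_{\myvec{u}}(\density\widetilde{E})$, so the quasilinear system \eqref{eq:quasilinear-system} attached to \eqref{eq:symmetric-balance-laws} is Friedrichs-symmetrizable on $\mathcal{U}$. Finally, since $n=24$, $d=3$ and $s>\frac{5}{2}=1+\frac{d}{2}$, Theorem~\ref{th:short-time-well-posedness} applies verbatim: for $\myvec{u}_0\in\mathcal{U}$ and $\widetilde{\myvec{u}_0}\in H^s(\R^3;\R^{24})$ with $\myvec{u}_0+\widetilde{\myvec{u}_0}$ compactly supported in $\mathcal{U}$, there exist $T>0$ and a unique classical solution $\myvec{u}\in C^1(\R^3\times[0,T];\mathcal{U})$ of the Cauchy problem for \eqref{eq:symmetric-balance-laws} with $\myvec{u}(0)=\myvec{u}_0+\widetilde{\myvec{u}_0}$, satisfying $\myvec{u}-\myvec{u}_0\in C([0,T];H^s)\cap C^1([0,T];H^{s-1})$. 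I expect the only real friction to be in this first step --- carefully delineating the open set on which \emph{all} the constitutive and source functions are simultaneously $C^\infty$, i.e.\ the joint interplay of $\density>0$, $\tensorq{Y}$ positive-definite, $\detY>0$ and $\det\fgrad\neq0$ --- the substantive convexity input (Lieb's concavity theorem, Sylvester's criterion) having already been discharged in Theorem~\ref{th:pg}.
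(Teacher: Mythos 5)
Your proof is correct and follows exactly the route the paper intends (and leaves implicit): recast \eqref{eq:symmetric-balance-laws} as \eqref{eq:system-of-balance-laws} with $n=24$, $d=3$, invoke Theorem~\ref{th:pg} for the strictly convex mathematical entropy $\density\widetilde{E}$ satisfying the additional balance law \eqref{eq:mathematical-entropy-evolution-equation-2}, and then chain Theorems~\ref{th:godunov-mock} and~\ref{th:short-time-well-posedness}. Your explicit restriction of $\mathcal{U}$ to $\{\det\fgrad\neq0\}$ --- needed so that the source terms $\tensorf{f}$, $h$ and $\entprodc$ (which involve $\inverse{\fgrad}\fgrad^{-\top}$ and $\inverse{\ctensor}$) are actually defined and smooth --- is a point the paper's definition \eqref{eq:V-set} of $\mathcal{V}$ glosses over, and is a welcome refinement rather than a deviation.
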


At this point, we should stress that the components of the solution $\myvec{u} = (\density, \density \entropy, \density \efluxc, \density \vecv, \density \fgrad, \density \tensorq{Y}, \density \detY)$ act as independent quantities. 
However, we are interested by the case where initial conditions for the Cauchy problem
$$
\myvec{u}_0 + \widetilde{\myvec{u}_0}
= (\density_0, \density_0 {\entropy}_0, \density_0 {\efluxc}_0, \density_0 \vecv_0, \density_0 \fgrad_0, \density_0 \tensorq{Y}_0, \density_0 \detY_0)
$$
satisfy $\detY_0 = 1/\det \tensorq{Y}_0$, and $\rhor = \density_0 \det \tensorq{F}_0$
with $\tensorq{F}_0 = \pd{\deformation_t|_{t=0}}{\myvec{a}}$
for some initial deformation $\deformation_t|_{t=0} : \R^3 \to \R^3$ such that 
$\vecv_0 \circ \deformation_t|_{t=0}= \partial_t\deformation_t|_{t=0}$,
consistently with physical interpretation.
Moreover, we would like to preserve the interpretation for $t > 0$, that is
$\detY(\myvec{x}, t) = 1/\det \tensorq{Y}(\myvec{x}, t)$, and 
$\rhor = \density(\myvec{x}, t) \det \tensorq{F}(\myvec{x}, t)$
with $\tensorq{F} = \pd{\deformation_t}{\myvec{a}}$
for some deformation $\deformation_t : \R^3 \to \R^3$ such that 
$\vecv \circ \deformation_t = \partial_t\deformation_t$ holds almost everywhere,
consistently with physical interpretation.
Now, whereas $\detY(\myvec{x}, t) = 1/\det \tensorq{Y}(\myvec{x}, t)$ is obviously preserved for $t > 0$ by the unique smooth solutions
to \eqref{eq:Y-evolution-equation}, \eqref{eq:detY-evolution-equation}, 
it is neither the case of $\tensorq{F} = \pd{\deformation_t}{\myvec{a}}$,
$\vecv_t \circ \deformation_t = \partial_t\deformation_t$ 
(i.e. the existence of a Lagrangian description equivalent almost everywhere to our Eulerian description)
nor of $\rhor = \density(\myvec{x}, t) \det \tensorq{F}(\myvec{x}, t)$ by consequence.
Indeed, our system of \emph{conservation} laws
has not preserved \eqref{eq:flf}, see e.g. \cite{wagner-1994}. 
%
This motivates 
a variation of our model in Section \ref{sec:generalizations-to-other-models}
which allows to preserve equivalence between Eulerian and Lagrangian descriptions with a \emph{conservation} law,
moreover 
with physical grounds in applications to polymer \emph{melts}. 


Lastly, to conclude this section, we would like to stress that our well-posedness result
holds for short times where $\temp>0$ is preserved, with $\temp$ interpreted as the temperature.
Note that it preserves $K(\theta)>0$ in turn, with $K$ interpreted as the stiffness.
Using $\entropy = - \pd{\fenergy}{\temp}$, 
the evolution equation for the specific entropy \eqref{eq:governing-equations-entropy} rewrites 
\begin{equation}
  \label{eq:temp-equation-1}
  \density
  \left(
    -
    \ppd{\fenergy}{\temp}
    \dd{\temp}{t}
    -
    \frac{\partial^2 \fenergy}{\partial \temp \partial \density}
    \dd{\density}{t}
    -
    \vectordot{\frac{\partial^2 \fenergy}{\partial \temp \partial \efluxc}}{\dd{\efluxc}{t}}
    -
    \tensordot{
      \frac{\partial^2 \fenergy}{\partial \temp \partial \ctensor}
    }{
      \dd{\ctensor}{t}
    }
  \right)
  +
  \divergence \left( \frac{\efluxc}{\temp}\right)
  =
  \entprodc.
\end{equation}
Then, multiplying equation \eqref{eq:temp-equation-1} by the temperature and exploiting the conservation of mass, see \eqref{eq:balance-of-mass}, we can further write
\begin{equation}
  \label{eq:temp-equation-2}
  \density
  \cheatvolsol
  \dd{\temp}{t}
  +
  \temp
  \pd{\pressure}{\temp}
  \divergence \vecv
  -
  \density \temp
  \vectordot{\frac{\partial^2 \fenergy}{\partial \temp \partial \efluxc}}{\dd{\efluxc}{t}}
  -
  \density
  \temp
  \tensordot{
    \frac{\partial^2 \fenergy}{\partial \temp \partial \ctensor}
  }{
    \dd{\ctensor}{t}
  }
  +
  \temp
  \divergence \left( \frac{\efluxc}{\temp}\right)
  =
  \temp \entprodc,
\end{equation}
where we recall 
$  \pressure(\density, \temp, \ctensor, \efluxc)
  \defeq
  \density^2
  \pd{\fenergy}{\density}
  (\density, \temp, \ctensor, \efluxc)
  =
  \density^2
  \pd{\fenergy_{\mathrm{s}}}{\density}
  (\density, \temp)
$ using \eqref{eq:pressure} and the standard thermodynamic relations,
and where we have introduced 
the specific heat capacity at constant volume of the \emph{solvent} $\cheatvolsol$ 
\begin{equation}
  \label{eq:cheatvolsol}
  \cheatvolsol(\density, \temp)
  \defeq
  -
  \temp
  \ppd{\fenergy_{\mathrm{s}}}{\temp}
  (\density, \temp)
  =
  -
  \temp
  \ppd{\fenergy}{\temp}
  (\density, \temp, \ctensor, \efluxc)  
.
\end{equation}
Finally, using the definition of the specific Helmholtz free energy \eqref{eq:fenergy}, 
the formula  \eqref{eq:entprod} for the entropy production,
the evolution equations for the energy flux \eqref{eq:governing-equations-eflux} 
and for the conformation tensor \eqref{eq:governing-equations-conformation-tensor}, 
we arrive at 
\begin{multline}
  \label{eq:temperature-evolution-equation}
  \density \cheatvolsol \dd{\temp}{t}
  +
  \divergence \efluxc
  =
  -
  \temp
  \left(
    \pd{p}{\temp}(\density, \temp, \ctensor, \efluxc)
    +
    \alpha \density \kboltzmann
  \right)
  \divergence \vecv
  +
  \vectordot{\efluxc}{\nabla \log \temp}
  \\
  +
  \frac{\absnorm{\efluxc}^2}{\kappa}
  +
  \frac{2 \alpha \density}{\dragcoef} K(\temp) 
  \tensordot{\identity}{\left( K(\temp) \ctensor - \kboltzmann \temp \identity \right)}.
\end{multline}
Clearly, the evolution equation \eqref{eq:temperature-evolution-equation} preserves $\temp>0$ for the smooth 
\emph{short} time
evolutions 
in Theorem \ref{th:maxwell-pg-short-time-well-posedness}.
Note in passing that due to 
the first two terms on the right-hand side, \eqref{eq:temperature-evolution-equation} cannot be formulated as a conservation law.

\section{Some variations of the Maxwell model}
\label{sec:generalizations-to-other-models}

We consider here three variations of the Maxwell model 
in Theorem \ref{th:maxwell-pg-short-time-well-posedness}.

First, we consider conservation laws that extend the elastodynamics to viscoelastic flows
using a different measure of strain than $\ctensor$ used in the previous section for compressible Maxwell fluids.
This is motivated mathematically by \cite{wagner-1994,wagner:symm} 
where it is shown that only such a variation allows a correct interpretation of the (small-time) smooth solutions.
This is also motivated physically by the success of such strain measures in some applications to polymer fluids; melts in particular.
That is why we formulate the first variation by \emph{adding} an energy term to the Helmholtz free-energy 
which is a function of that new strain measure,
like in the useful class of K-BKZ fluids which inspires our formulation. 
To obtain a well-posedness result similar to Theorem \ref{th:maxwell-pg-short-time-well-posedness} with that variation,
it suffices to establish a mathematical entropy, which is done in Section \ref{sec:eulerian-lagrangian-equivalence}.
Noteworthily, in addition to a well-posedness result similar to Theorem \ref{th:maxwell-pg-short-time-well-posedness},
that variation can be shown \emph{fully consistent with its physical interpretation} as time evolves
(smooth solutions preserve their initial interpretation).

Second, we consider an independent variation which uses a different elastic energy term motivated by physics:
real strain is bounded above. 
We show that our well-posedness result still holds with a standard elastic energy term
that accounts for polymers \emph{finite-extensibility}.

Last, we consider in Section \ref{sec:nasg} another independent variation regarding the volumetric energy term,
since the polytropic gas equation of state is 
not very well-suited for polymeric 
\emph{liquids}. A small-time well-posedness result similar to Theorem \ref{th:maxwell-pg-short-time-well-posedness}
can be established provided a mathematical entropy is established, and $\rho\in (0,b)$ is ensured on choosing an initial condition
$\rho_0\in (0,b)$.

Many other 
variations are certainly possible. Note our three ones here have mathematical and/or physical motivation.
Different interesting variations to be considered in the future are mentionned in our conclusive Section \ref{sec:conclusion}.

\subsection{Adding another 
measure of strain 
like in 
K-BKZ fluids} 
\label{sec:eulerian-lagrangian-equivalence}

Let us postulate the specific Helmholtz free energy in the form
\begin{multline}
  \label{eq:fenergy-ele}
  \fenergy(\density, \temp, \ctensor_1, \ctensor_2, \efluxc)
  =
  \fenergy_{\mathrm{s}}(\density, \temp)
  +
  \frac{\alpha}{2}
  \left(
    K^{(1)}(\temp) \tr \ctensor_1
    -
    \kboltzmann \temp \log \det \ctensor_1
  \right)
  \\
  +
  \frac{\alpha}{2}
  \left(
    K^{(2)}(\temp) \tr \ctensor_2
    -
    \kboltzmann \temp \log \det \ctensor_2
  \right)
  +
  \frac{\tau_0}{2 \kappa} \absnorm{\efluxc}^2
  ,
\end{multline}
where $\ctensor_1$, $\ctensor_2$ are two symmetric positive-definite 
tensors measuring strain and $K^{(1)}$, $K^{(2)}$ are 
two corresponding stiffness, affine functions of the temperature
\begin{subequations}
  \label{eq:elastic-spring-factors-affine}
  \begin{align}
    K^{(1)}
    &\defeq
    K^{(1)}_0 + K^{(1)}_1 \temp,
    \\
    K^{(2)}
    &\defeq
    K^{(2)}_0 + K^{(2)}_1 \temp, 
  \end{align}
\end{subequations}
where $K^{(1)}_0, K^{(1)}_1, K^{(2)}_0, K^{(2)}_1 > 0$ are constant parameters. 
Following Section \ref{sec:thermodynamic-background}, a fundamental thermodynamic relation in terms of the internal energy 
\begin{multline}
  \label{eq:el-ienergy}
  \ienergy(\density, \entropy, \ctensor_1, \ctensor_2, \efluxc)
  =
  \\
  \ienergy_{\mathrm{s}} 
  \left( 
    \density, 
    \entropy
    +
    \frac{\alpha}{2}
    \left(
      K^{(1)}_1 \tr \ctensor_1
      +
      K^{(2)}_1 \tr \ctensor_2
      -
      \kboltzmann \log \det \left( \ctensor_1 \ctensor_2 \right)
    \right)
  \right) 
  \\
  + 
  \frac{\alpha}{2} K^{(1)}_0 \tr \ctensor_1
  + 
  \frac{\alpha}{2} K^{(2)}_0 \tr \ctensor_2
  +
  \frac{\tau_0}{2 \kappa} \absnorm{\efluxc}^2
\end{multline}
is equivalent upon assuming differentiability of $\fenergy_{\mathrm{s}},\ienergy_{\mathrm{s}}$.
Further, we assume that $\ctensor_1$ is a measure of strain similar to $\ctensor$ in the previous sections
(i.e. similar to the left Cauchy-Green deformation tensor $\fgrad\transpose{\fgrad}$)
while we assume $\ctensor_2/\rho^2$ similar to the inverse right Cauchy-Green deformation tensor $\transposei{\fgrad}\inverse{\fgrad}$. 
Precisely, we postulate 
time rates are given by the following 
objective time derivatives 
\begin{subequations}
  \label{eq:od}
  \begin{align}
    \fid{\ctensor_1}
    &\defeq
    \dd{\ctensor_1}{t}
    -
    \gradvl \ctensor_1
    -
    \ctensor_1 \transpose{\gradvl},
    \\   
    \label{eq:lower-convected-derivative-alternative}
    \lfid{\ctensor_2}
    &\defeq
    \dd{\ctensor_2}{t}
    +
    \ctensor_2 \gradvl
    +
    \transpose{\gradvl} \ctensor_2
    -
    2 \left( \tr \gradvl \right) \ctensor_2.
  \end{align}
\end{subequations}
Next, we can derive constitutive relations following 
Section \ref{sec:constitutive-relations}. 
We first take the material derivative of the relation $\ienergy = \ienergy(\density, \entropy, \ctensor_1, \ctensor_2, \efluxc)$
\begin{equation}
  \density \dd{\ienergy}{t}
  =
  \density
  \left(
    \pd{\ienergy}{\density}
    \dd{\density}{t}
    +
    \pd{\ienergy}{\entropy}
    \dd{\entropy}{t}
    +
    \tensordot{\pd{\ienergy}{\ctensor_1}}{\dd{\ctensor_1}{t}}
    +
    \tensordot{\pd{\ienergy}{\ctensor_2}}{\dd{\ctensor_2}{t}}
    +
    \vectordot{\pd{\ienergy}{\efluxc}}{\dd{\efluxc}{t}}
  \right).
\end{equation}
The mass balance \eqref{eq:balance-of-mass} and the 
equation \eqref{eq:ienergy-evolution-equation} for internal energy 
then yield
\begin{multline}
  \label{eq:ddd}
  \density \dd{\entropy}{t}
  +
  \divergence \left( \frac{\efluxc}{\temp} \right)
  =
  \frac{1}{\temp}
  \bigg(
    \tensordot{\cstress}{\gradsym}
    +
    \pressure \divergence \vecv
    \\
    -
    \tensordot{\pd{\ienergy}{\ctensor_1}}{\dd{\ctensor_1}{t}}
    -
    \tensordot{\pd{\ienergy}{\ctensor_2}}{\dd{\ctensor_2}{t}}
    -
    \vectordot{\efluxc}{\left( \frac{\nabla \temp}{\temp}+ \frac{\tau_0 \density}{\kappa} \dd{\efluxc}{t} \right)}
  \bigg),
\end{multline}
where we introduced the temperature $\temp$ and the pressure $\pressure$ defined by the standard thermodynamic relations
\eqref{eq:temperature-pressure}. 
Exploiting the explicit relation for the internal energy \eqref{eq:el-ienergy} and 
\eqref{eq:od}, equation \eqref{eq:ddd} rewrites
\begin{multline}
  \label{eq:11}
  \density \dd{\entropy}{t}
  +
  \divergence \left( \frac{\efluxc}{\temp} \right)
  =
  \frac{1}{\temp}
  \bigg[
    \tensordot{
      \left( \cstress - \alpha \density \left( K^{(1)}(\temp) \ctensor_1 - K^{(2)}(\temp) \ctensor_2 \right) \right)
    }{
      \gradsym
    }
    \\
    +
    \left( \pressure - \alpha \density \left( K^{(2)}(\temp) \tr \ctensor_2 - 3 \kboltzmann \temp \right) \right) \divergence \vecv
    -
    \frac{\vectordot{\efluxc}{\nabla \temp}}{\temp}
    \\
    -
    \frac{\alpha \density}{2}
    \tensordot{
      \left( K^{(1)}(\temp) \identity - \kboltzmann \temp \inverse{\ctensor_1} \right)
    }{
      \fid{\ctensor_1}
    }
    -
    \frac{\alpha \density}{2}
    \tensordot{
      \left( K^{(2)}(\temp) \identity - \kboltzmann \temp \inverse{\ctensor_2} \right)
    }{
      \lfid{\ctensor_2}
    }
  \bigg].
\end{multline}
This finally leads to the following set of constitutive relations 
\begin{subequations}
  \label{eq:el-constitutive-relations}
  \begin{align}
  \cstress
  &=
  -
  \pressure \identity
  +
  \tensorq{S},
  \\
  \tensorq{S}
  &=
  \alpha \density
  \left[
    K^{(1)}(\temp) \ctensor_1 - K^{(2)}(\temp) \ctensor_2
    -
    \left( K^{(2)}(\temp) \tr \ctensor_2 - 3 \kboltzmann \temp \right) \identity
  \right],
  \\
  \label{eq:el-constitutive-relations-ctensor-1}
  \fid{\ctensor_1}
  &=
  -
  \frac{4 K^{(1)}(\temp)}{\dragcoef}
  \ctensor_1
  +
  \frac{4 \kboltzmann \temp}{\dragcoef} \identity,
  \\
  \label{eq:el-constitutive-relations-ctensor-2}
  \lfid{\ctensor_2}
  &=
  -
  \frac{4 K^{(2)}(\temp)}{\dragcoef}
  \ctensor_2
  +
  \frac{4 \kboltzmann \temp}{\dragcoef} \identity,
  \end{align}
\end{subequations}
which guarantee that the right-hand side of \eqref{eq:11}, i.e. the entropy production, is non-negative. 
From \eqref{eq:el-constitutive-relations} one also recognizes the viscoelastic character of the Hookean material considered here,
insofar as $\tensorq{S}$ follows Maxwell law in the infinitesimal strain limit \cite{boyaval:viscoelastic}.

Next, with a view to applying Theorem \ref{th:pg}, we proceed 
as in Section \ref{sec:governing-equations-symmetrizable-system-balance-laws} and introduce 
symmetric positive-definite tensors $\tensorq{A}_1$, $\tensorq{A}_2$ via the formulae
\begin{subequations}
  \begin{align}
    \tensorq{A}_1
    &\defeq
    \inverse{\fgrad} \ctensor_1 \fgrad^{-\top},
    \\
    \tensorq{A}_2
    &\defeq
    \inverse{\left( \Cof \fgrad \right)} \ctensor_2 \left( \Cof \fgrad \right)^{-\top}.
  \end{align}
\end{subequations}
Using 
\eqref{eq:el-constitutive-relations-ctensor-1}, \eqref{eq:el-constitutive-relations-ctensor-2} for 
$\ctensor_1$, $\ctensor_2$, along with the kinematic relations
\begin{subequations}
  \label{eq:kinematic-relations}
  \begin{align}
    \label{eq:kinematic-relations-fgrad}
    \dd{\fgrad}{t}
    &=
    \gradvl \fgrad,
    \\
    \label{eq:kinematic-relations-cofactor}
    \dd{(\Cof \fgrad)}{t}
    &=
    \left(
      \left( \tr \gradvl \right) \identity
      -
      \transpose{\gradvl}
    \right)
    \Cof \fgrad,
  \end{align}
\end{subequations}
one can easily identify evolution equations
\begin{subequations}
  \begin{align}
    \dd{\tensorq{A}_1}{t} 
    &= 
    - 
    \frac{4 K^{(1)}(\temp)}{\dragcoef}
    \tensorq{A}_1
    +
    \frac{4 \kboltzmann \temp}{\dragcoef} \inverse{\fgrad} \fgrad^{-\top},
    \\
    \dd{\tensorq{A}_2}{t} 
    &= 
    - 
    \frac{4 K^{(2)}(\temp)}{\dragcoef}
    \tensorq{A}_2
    +
    \frac{4 \kboltzmann \temp}{\dragcoef} \inverse{\left( \Cof \fgrad \right)} \left( \Cof \fgrad \right)^{-\top},
  \end{align}  
\end{subequations}
so we can formulate a system of \emph{conservation} 
laws with relaxation source terms 
in the 
unknowns $(\density, \density \entropy, \density \efluxc, \density \vecv, \density \fgrad, \density \transpose{\Cof \fgrad}, \density \tensorq{A}_1, \density \tensorq{A}_2)$ which governs 
our new compressible Maxwell fluids of a more general type, with 
two 
strain measures.
We recall that 
$\density \transpose{\Cof \fgrad}\equiv\inverse{\fgrad}$ 
is governed by the conservation law (see \cite{wagner:symm})
\begin{equation}
  \label{eq:cofactor-conservation-law}
  \pd{(\inverse{\fgrad})}{t}
  +
  \nabla\otimes
  \left(
    \inverse{\fgrad}\vecv
  \right)
  =
  0,
\end{equation}
where we have used the notation $\left[\nabla\otimes\left(\inverse{\fgrad}\vecv\right)\right]^\alpha_i = \pd{}{x^i}\left( [\fgradc^{-1}]^\alpha_j \vecvc^j \right)$.
%
To establish (short-time) well-posedness, 
it remains to identify a \emph{symmetric-hyperbolic} system of conservation laws, typically endowed with a mathematical entropy.

Introducing symmetric positive-definite tensors $\tensorq{Y}_1\defeq\tensorq{A}_1^{-2}$, $\tensorq{Y}_2\defeq\tensorq{A}_2^{-2}$
and scalars $\detY_1\defeq\det \inverse{\tensorq{Y}_1}$, $\detY_2\defeq\det \inverse{\tensorq{Y}_2}$ 
as in Section \ref{sec:governing-equations-symmetrizable-system-balance-laws}, we obtain a system of conservation 
laws with relaxation source terms for the set of unknowns 
\begin{equation}
 \label{eq:set}
(\density, \density \entropy, \density \efluxc, \density \vecv, \density \fgrad, \density \transpose{\Cof \fgrad}, \density \tensorq{Y}_1, \density \tensorq{Y}_2, \density  \detY_1, \density \detY_2).
\end{equation}
That system is endowed with  a
mathematical entropy 
\begin{multline}
  \label{eq:el-mathematical-entropy-definition}
  \widetilde{E}
  \defeq
  \frac{1}{2} \absnorm{\vecv}^2
  +
  \widetilde{\ienergy_{\mathrm{s}}} (\density, \entropy, \fgrad, \transpose{\Cof \fgrad}, \tensorq{Y}_1, \tensorq{Y}_2, \detY_1, \detY_2)
  +
  \frac{\tau_0}{2 \kappa} \absnorm{\efluxc}^2
  \\
  + 
  \frac{\alpha}{2} K^{(1)}_0 \tr \left( \tensorq{Y}_1^{-\frac{1}{2}} \transpose{\fgrad} \fgrad \right)
  + 
  \frac{\alpha}{2} K^{(2)}_0 \tr \left( \tensorq{Y}_2^{-\frac{1}{2}} \transpose{\Cof \fgrad} \Cof \fgrad \right)
  \\
  +
  \frac{1}{2} \ienergyref 
  \left( 
    \absnorm{\tensorq{Y}_1}^2
    +
    \absnorm{\tensorq{Y}_2}^2 
  \right),
\end{multline}
where 
---exploiting 
\eqref{eq:rhor} and 
$\det (\Cof \fgrad) = (\det \fgrad)^2$---
$\widetilde{\ienergy_{\mathrm{s}}}$ is defined as
\begin{multline}
  \label{eq:el-ienergy-s-tilde}
  \widetilde{\ienergy_{\mathrm{s}}}
  (\density, \entropy, \fgrad, \transpose{\Cof \fgrad}, \tensorq{Y}_1, \tensorq{Y}_2, \detY_1, \detY_2)
  \defeq
  \\
  \ienergy_{\mathrm{s}} 
  \Bigg(
    \density, 
    \entropy 
    +
    \frac{\alpha}{2}
    \Bigg[
      K^{(1)}_1 \tr \left( \tensorq{Y}_1^{-\frac{1}{2}} \transpose{\fgrad} \fgrad \right)
      +
      K^{(2)}_1 \tr \left( \tensorq{Y}_2^{-\frac{1}{2}} \transpose{\Cof \fgrad} \Cof \fgrad \right)  
      \\
      -
      \kboltzmann 
      \log 
      \left( 
        \left( \frac{\rhor}{\density} \right)^6 \detY_1^{\frac{1}{2}} \detY_2^{\frac{1}{2}}
      \right) 
    \Bigg]
  \Bigg)
\end{multline}
if 
$\widetilde{e_{\mathrm{s}}}$ is convex with respect to 
$(\density, \entropy, \fgrad, \transpose{\Cof \fgrad}, \tensorq{Y}_1, \tensorq{Y}_2, \detY_1, \detY_2)$ and strictly convex with respect to $(\inverse{\density}, \entropy, \detY_1, \detY_2)$. 
%
The procedure for verifying that $\widetilde{\ienergy_{\mathrm{s}}}$ satisfies the convexity properties for specific instances of the solvent contribution is completely analogous to the one in Section \ref{sec:governing-equations-symmetrizable-system-balance-laws} and we refrain from giving a detailed derivation. 
Let us only prove that the polytropic gas equation of state yields the quantity $\widetilde{\ienergy_{\mathrm{s}}}$ being strictly convex with respect to $(\inverse{\density}, \entropy, \detY_1, \detY_2)$. 


From the fundamental thermodynamic relation for the polytropic gas in terms of the internal energy, see \eqref{eq:pg-ienergy}, we obtain
\begin{multline}
    \label{eq:pg-ienergy-tilde-ele}
    \widetilde{\ienergy_{\mathrm{s}}}
    (\density, \entropy, \fgrad, \transpose{\Cof \fgrad}, \tensorq{Y}_1, \tensorq{Y}_2, \detY_1, \detY_2)
    =
    \cheatvolsol \tempref \left( \frac{\density}{\densityref} \right)^{\gamma - 1} 
    \\
    \times
    \exp 
    \left(
      \frac{\entropy}{\cheatvolsol}
      +
      \frac{\alpha K^{(1)}_1}{2 \cheatvolref} \tr \left( \fgrad \tensorq{Y}_1^{-\frac{1}{2}} \transpose{\fgrad} \right)
      +
      \frac{\alpha K^{(2)}_1}{2 \cheatvolref} \tr \left( \left( \Cof \fgrad \right) \tensorq{Y}_2^{-\frac{1}{2}} \transpose{\left( \Cof \fgrad \right)} \right)
    \right)
    \\
    \times
    \left(
      \left( \frac{\rhor}{\density} \right)^6 \detY_1^{\frac{1}{2}} \detY_2^{\frac{1}{2}}
    \right)^{-\frac{\alpha \kboltzmann}{2 \cheatvolsol}}.
\end{multline}
  We are interested in proving the strict convexity of $\widetilde{\ienergy_{\mathrm{s}}}$ with respect to the unknowns $(\inverse{\density}, \entropy, \detY_1, \detY_2)$. For this purpose, let us define an auxiliary function
  \begin{equation}
    \label{eq:pg-f-ele}
    f(x, y, z_1, z_2)
    \defeq
    \frac{\exponential{q y}}{x^p z_1^r z_2^r},
  \end{equation}
  where $(x, y, z_1, z_2) \in D_{f} \defeq \{ (x, y, z_1, z_2) \in \left( 0, +\infty \right) \times \R \times \left( 0, +\infty \right) \times \left( 0, +\infty \right)\}$ and $p, q, r > 0$. To prove that the function $\widetilde{\ienergy_{\mathrm{s}}}$ is strictly convex with respect to $(\density^{-1}, \entropy, \detY_1, \detY_2)$ for arbitrary values of the material (or reference) parameters $\alpha$, $\cheatvolsol$, $\gamma$, $\tempref$, $\densityref$ it suffices to prove that the function $f$ is strictly convex with respect to $(x, y, z_1, z_2)$ for arbitrary values of the parameters $p, q, r$. Let $H_i$ denote the $i$-th leading principal minor of the Hessian matrix $\myvec{H}$ corresponding to the function $f$. An elementary calculation yields
  \begin{align}
    \label{eq:pg-determinants-ele}
    H_1 &= \frac{p(p+1) \exponential{q y}}{x^{p+2} z_1^r z_2^r},
    &
    H_2 &= \frac{q^2 p \exponential{2 q y}}{x^{2p+2} z_1^{2r} z_2^{2r}},
    \\
    H_3 &= \frac{q^2 p r \exponential{3 q y}}{x^{3p+2} z_1^{3r + 2} z_2^{3r}},
    &
    H_4 &= \frac{q^2 p r^2 \exponential{4 q y}}{x^{4p+2} z_1^{4r + 2} z_2^{4r+2}},
  \end{align}
  and we 
  see that, by Sylvester's criterion, the Hessian matrix $\myvec{H}$ is positive definite on $D_f$. Consequently, the function $f$ is strictly convex on $D_{f}$.

We can finally use Theorem \ref{th:short-time-well-posedness} 
to establish well-posedness of the system of conservation laws for \eqref{eq:set},
which is symmetric-hyperbolic by application of Theorem \ref{th:godunov-mock}, 
as in Theorem \ref{th:pg} for our previous (simpler) Maxwell fluid.
Moreover, in 
addition to Theorem \ref{th:pg}, we can establish the full consistency for all existence times $t>0$
of the smooth solutions with their physical interpretation set initially.
Indeed, equation \eqref{eq:cofactor-conservation-law} obviously allows smooth solutions to preserve the Piola's identity
\begin{equation}
  \label{eq:piola-identity-2}
  \rot \left( \inverse{\fgrad} \right) = 0,
\end{equation}
thus 
the 
interpretation of $\inverse{\fgrad}$ as a gradient $\nabla \inverse{\deformation_t}$, see \cite{wagner-1994}.
In turn, it allows the preservation of all other desired interpretations that were missing after our Theorem \ref{th:pg} in the previous Section.

A last comment is in order here, before we shift to another variation of our initial 
Maxwell model.
We have chosen here a Helmholtz free energy \eqref{eq:fenergy-ele} that is \emph{one particular} sum of three 
energy terms, functions of three particular strain measures: 
\begin{multline*}
  \fenergy 
  =
  \fenergy_{\mathrm{s}}(\density, \temp)
  +
  \frac{\alpha}{2}
  \Bigg(
    K^{(1)}(\temp) \tr \left( \tensorq{Y}_1^{-\frac{1}{2}} \transpose{\fgrad} \fgrad \right) 
  +
    K^{(2)}(\temp) \tr \left( \tensorq{Y}_2^{-\frac{1}{2}} \transpose{\Cof \fgrad} \Cof \fgrad \right) 
    \\
    -
    \kboltzmann 
    \log 
    \left( 
      \left( \frac{\rhor}{\density} \right)^6 \detY_1^{\frac{1}{2}} \detY_2^{\frac{1}{2}}
    \right) 
  \Bigg)
  +
  \frac{\tau_0}{2 \kappa} \absnorm{\efluxc}^2
\,.
\end{multline*}
We have showed that this choice provides a \emph{fully consistent} system of conservation laws 
that naturally extends standard elastodynamics to 
fading memory materials 
with a \emph{mathematical entropy}---thereby ensuring (short time) well-posedness.
Certainly, this is only one choice 
among many possible other Helmholtz free energies using different functions of different strain measures.
We show below two physical variations of the Hookean energy term and of the volumetric energy term, respectively,
keeping the same strain measures.

But one could also use different strain measures,
for instance assuming $\ctensor_2$ (and not $\ctensor_2/\rho^2$) 
similar to the inverse right Cauchy-Green deformation tensor $\transposei{\fgrad}\inverse{\fgrad}$ (sometimes termed Finger deformation),
then with another 
time rate than \eqref{eq:od} (the objective lower-convected 
derivative, in effect). 
And we are not aware of a clear rationale that would lead one to choose a particular formula function of various strain measures.
  
\subsection{Introducting finite-extensibility effects}
\label{sec:compressible-heat-conducting-fene-p-fluid}

We propose a variation of the Hookean elastic energy term in the compressible Maxwell models above
that takes into account the finite-extensibility of polymers. 
For the sake of simplicity, we consider only one ``distortional'' strain measure (beyond the purely volumetric variable $\density$),
chosen similar to the left Cauchy-Green deformation tensor as in Section \ref{sec:short-time-well-posedness-for-compressible-heat-conducting-maxwell-fluid}.
We consider the fundamental thermodynamic relation in terms of the specific Helmholtz free energy with an elastic term of FENE-P type \cite{peterlin.a:hydrodynamics}
\begin{multline}
  \label{eq:fene-p-fenergy}
  \fenergy(\density, \temp, \ctensor, \efluxc)
  =
  \fenergy_{\mathrm{s}}(\density, \temp)
  +
  \frac{\tau_0}{2 \kappa} \absnorm{\efluxc}^2
  \\
  +
  \frac{\alpha}{2}
  \left(
    -
    K(\temp) b^2 \log \left( 1 - \frac{\tr \ctensor}{b^2} \right)
    -
    \kboltzmann \temp \log \det \ctensor
  \right),
\end{multline}
where the additional constant parameter $b$ denotes the maximum allowable extension of the polymer chains. 
We again assume the 
stiffness $K$ affine in the temperature, see \eqref{eq:elastic-spring-factor-affine-function}.
But contrary to \cite{dressler.m.edwards.bj.ea:macroscopic}, we opt for the simple setting in which the material parameter $b$ does not depend on the temperature. 

The new (distortional) elastic energy term, in comparison to the specific Helmholtz free energy for the Maxwell model \eqref{eq:fenergy}, 
yields a different constitutive relations for the Cauchy stress tensor $\cstress$, 
and a different evolution equation for the conformation tensor $\ctensor$ (a different source term, in effect)
as well as a different formula for the entropy production $\entprodc$. 
However, these changes can be derived easily and
they are not essential to the well-posedness 
of the model.

Regarding well-posedness, the only substantial difference appears in the formulation of 
the mathematical entropy, see \eqref{eq:mathematical-entropy-definition} and \eqref{eq:ienergy-s-tilde}. There, the terms
\begin{equation}
  \label{eq:101}
  \tr \left( \fgrad \tensorq{Y}^{-\frac{1}{2}} \transpose{\fgrad} \right)
\end{equation}
are replaced by
\begin{equation}
  \label{eq:102}
  -b^2
  \log \left( 1 - \frac{\tr \left( \fgrad \tensorq{Y}^{-\frac{1}{2}} \transpose{\fgrad} \right)}{b^2} \right).  
\end{equation}
Since the rest of the formula for $\widetilde{E}$ stays the same, it remains to verify that the convexity properties of the term \eqref{eq:101} are preserved for \eqref{eq:102} as well. Specifically, \eqref{eq:102} must be convex with respect to $(\fgrad, \tensorq{Y})$ and strictly convex with respect to $\fgrad$. However, this is a simple consequence of Lemma \ref{lem:1} since the scalar function $- \log (1 - x/b^2)$ is increasing and convex and the aforementioned convexity properties hold for the term $\tr (\fgrad \tensorq{Y}^{-\frac{1}{2}} \transpose{\fgrad})$.

We thus conclude that an analogous short-time well-posedness result as for the Maxwell model, see Section \ref{sec:short-time-well-posedness-for-compressible-heat-conducting-maxwell-fluid}, can be obtained for the FENE-P model.

\subsection{Noble--Abel stiffened-gas equation of state}
\label{sec:nasg}

We propose a variation of the volumetric term in our Maxwell model,
the Noble--Abel stiffened-gas equation of state 
\cite{le-metayer.o.saurel.r:noble-abel},
that is capable of describing gaseous as well as liquid phases by a suitable choice of parameters. 

The fundamental thermodynamic relation for the Noble--Abel stiffened gas in terms of the specific internal energy (a complete equation of state) reads 
\begin{equation}
  \label{eq:nasg-ienergy}
  \ienergy_{\mathrm{s}}(\density, \entropy)
  =
  \cheatvolsol \tempref
  \left(
    \frac{\density}{\densityref}
    \frac{1}{1 - b \density}
  \right)^{\gamma - 1}
  \exponential{\frac{\entropy}{\cheatvolsol}}
  +
  \left( \frac{1}{\density} - b \right) \pressure_{\infty}
  +
  q,
\end{equation}
where $\cheatvolsol$ denotes the specific heat capacity at constant volume of the solvent, $\gamma > 1$ is a constant referred to as the adiabatic exponent, and $b$, $q$, and $\pressure_{\infty}$ denote constant material parameters specific for the given fluid. The symbols $\tempref$ and $\densityref$ denote constant reference temperature and constant reference density, respectively.
%

An analogous (short-time) well-posedness result as Theorem \ref{th:pg} in Section \ref{sec:governing-equations-symmetrizable-system-balance-laws} 
can be achieved. It suffices to prove:
\begin{theorem}
  If the volumetric energy term 
  is 
  the Noble--Abel stiffened-gas equation of state \eqref{eq:nasg-ienergy}, then
  the scalar quantity $\density \widetilde{E}$, where $\widetilde{E}$ is given by 
  \eqref{eq:mathematical-entropy-definition}, is a mathematical entropy of the system of balance laws \eqref{eq:symmetric-balance-laws}.
\end{theorem}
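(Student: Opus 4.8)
The plan is to mimic the proof of Theorem~\ref{th:pg}, since the only change with respect to that setting is the volumetric energy term. First I would observe that the additional balance law~\eqref{eq:mathematical-entropy-evolution-equation-2} for $\density\widetilde{E}$ is valid for \emph{any} choice of $\ienergy_{\mathrm{s}}$: it was obtained solely from the total-energy balance~\eqref{eq:balance-law-total-energy} and the algebraic identity~\eqref{eq:mathematical-entropy-reformulation}. Hence, by Theorem~\ref{th:strict-convexity}, the only point left to prove is that the function $\widetilde{\ienergy_{\mathrm{s}}}$ defined by~\eqref{eq:ienergy-s-tilde}, with $\ienergy_{\mathrm{s}}$ now given by the Noble--Abel stiffened-gas law~\eqref{eq:nasg-ienergy}, is convex with respect to $(\inverse{\density},\entropy,\fgrad,\tensorq{Y},\detY)$ and strictly convex with respect to $(\inverse{\density},\entropy,\detY)$, on the admissible set where $1-b\density>0$ (equivalently $\inverse{\density}>b$), which is precisely the set kept invariant for short times by Theorem~\ref{th:short-time-well-posedness} once $\density_0$ is taken inside it.

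Substituting~\eqref{eq:nasg-ienergy} into~\eqref{eq:ienergy-s-tilde} and expanding the logarithms in the shifted entropy argument exactly as in the passage from~\eqref{eq:pg-ienergy} to~\eqref{eq:pg-ienergy-tilde}, I would write
\begin{equation*}
  \widetilde{\ienergy_{\mathrm{s}}}
  =
  C\,\left(\inverse{\density}-b\right)^{-(\gamma-1)}\,\inverse{\density}^{\,-p_1}\,\detY^{-r_1}\,\exponential{q_1\entropy + s_1\tr\left(\fgrad\tensorq{Y}^{-\frac{1}{2}}\transpose{\fgrad}\right)}
  +
  \left(\inverse{\density}-b\right)\pressure_{\infty}
  +
  q ,
\end{equation*}
with constants $C,p_1,q_1,r_1,s_1>0$. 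The last two summands are affine, respectively constant, in the variables of interest, hence irrelevant to any convexity statement; everything reduces to the first summand, call it $\widetilde{\ienergy_{\mathrm{s}}}^{(1)}$. For the joint convexity I would invoke Lemma~\ref{lem:1}: write $\widetilde{\ienergy_{\mathrm{s}}}^{(1)}=C\exponential{g}$ with $g = -(\gamma-1)\log\left(\inverse{\density}-b\right) - p_1\log\left(\inverse{\density}\right) - r_1\log\detY + q_1\entropy + s_1\tr\left(\fgrad\tensorq{Y}^{-\frac{1}{2}}\transpose{\fgrad}\right)$. Each summand of $g$ is convex on the admissible set — the first three because $-\log$ is convex and precomposed with an affine map, the fourth because it is affine, and the last by the result of~\cite{lieb:convex} already used in the proof of Theorem~\ref{th:strict-convexity} — so $g$ is jointly convex, and since $\exp$ is nondecreasing and convex, Lemma~\ref{lem:1} gives the convexity of $\widetilde{\ienergy_{\mathrm{s}}}^{(1)}$, hence of $\widetilde{\ienergy_{\mathrm{s}}}$, with respect to $(\inverse{\density},\entropy,\fgrad,\tensorq{Y},\detY)$.

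For the strict convexity with respect to $(\inverse{\density},\entropy,\detY)$ I would, exactly as in the proof of Theorem~\ref{th:pg}, freeze $\fgrad$ and $\tensorq{Y}$ (so that $\tr(\fgrad\tensorq{Y}^{-\frac{1}{2}}\transpose{\fgrad})$ becomes a constant and the affine/constant tail has vanishing Hessian) and reduce to the auxiliary function $f(x,y,z)=\frac{\exponential{q_1 y}}{(x-b)^{m}\,x^{n}\,z^{r}}$ on $\{x>b\}\times\R\times(0,+\infty)$, where $m:=\gamma-1>0$ and $n,q_1,r>0$. Writing $f=\exponential{\varphi}$, one has $\nabla^2 f=\left(\nabla\varphi\otimes\nabla\varphi+\nabla^2\varphi\right)f$, and a direct computation of the leading principal minors, entirely parallel to~\eqref{eq:pg-determinants}, gives $H_1=(a^2+P)\,f$, $H_2=q_1^2\,P\,f^2$, $H_3=q_1^2\,P\,D\,f^3$, where $a=\partial_x\varphi$, $P=\partial_{xx}\varphi=\frac{m}{(x-b)^2}+\frac{n}{x^2}>0$ and $D=\partial_{zz}\varphi=\frac{r}{z^2}>0$; all three minors are positive, so $f$ is strictly convex by Sylvester's criterion. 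The only structural input is $P>0$, $D>0$, $q_1\neq 0$, so the extra factor $(x-b)^{-m}$ compared with~\eqref{eq:pg-f} causes no difficulty. Combining the two convexity properties of $\widetilde{\ienergy_{\mathrm{s}}}$ with Theorem~\ref{th:strict-convexity} yields the strict convexity of $\density\widetilde{E}$, which together with the first paragraph shows that $\density\widetilde{E}$ is a mathematical entropy of~\eqref{eq:symmetric-balance-laws}.

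I do not expect a genuine obstacle here; the step needing attention is purely a bookkeeping one, namely the restriction of the density range. Unlike the polytropic case, the volumetric energy~\eqref{eq:nasg-ienergy} is defined and smooth only where $1-b\density>0$, so convexity (and positivity of the thermodynamic temperature, which enters the open set $\mathcal{V}$) must be checked on that open convex set rather than on all of $(0,+\infty)$; this is exactly the set preserved for short times once $\density_0$ is chosen in it, so the ensuing well-posedness statement has the same form as Theorem~\ref{th:maxwell-pg-short-time-well-posedness}.
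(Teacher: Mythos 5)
Your proposal is correct and follows essentially the same route as the paper: reduce via Theorem~\ref{th:strict-convexity} to the two convexity properties of $\widetilde{\ienergy_{\mathrm{s}}}$, note that the affine tail $\left(\inverse{\density}-b\right)\pressure_{\infty}+q$ is harmless, obtain joint convexity from Lemma~\ref{lem:1} applied to $\exp$ of a sum of convex terms (with the Lieb convexity of $\tr\left(\fgrad\tensorq{Y}^{-\frac{1}{2}}\transpose{\fgrad}\right)$), and obtain strict convexity in $(\inverse{\density},\entropy,\detY)$ by Sylvester's criterion for the same auxiliary function $f(x,y,z)=\exponential{qy}/\left((x-b)^{p}x^{r}z^{r}\right)$ on $x>b$. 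The only cosmetic difference is that you compute the principal minors through the identity $\nabla^2\exponential{\varphi}=\exponential{\varphi}\left(\nabla\varphi\otimes\nabla\varphi+\nabla^2\varphi\right)$, giving $H_2=q^2P f^2$, $H_3=q^2PD f^3$ directly, whereas the paper writes the minors explicitly and checks the sign of the discriminants of the resulting quadratics; both verifications are valid, and your handling of the admissible density range $1-b\density>0$ matches the paper's remark preceding the theorem.
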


\begin{proof}
  Similarly as in the proof of Theorem \ref{th:pg} it suffices to show that the function $\widetilde{\ienergy_{\mathrm{s}}}$ defined by \eqref{eq:ienergy-s-tilde} satisfies the assumptions of Theorem \ref{th:strict-convexity}, i.e. that $\widetilde{\ienergy_{\mathrm{s}}}$ is strictly convex with respect to $(\inverse{\density}, \entropy, \detY)$ and convex with respect to $(\inverse{\density}, \entropy, \fgrad, \tensorq{Y}, \detY)$. The fundamental relation \eqref{eq:nasg-ienergy} yields an explicit formula for the quantity $\widetilde{\ienergy_{\mathrm{s}}}$,
  \begin{multline}
    \label{eq:nasg-ienergy-tilde}
    \widetilde{\ienergy_{\mathrm{s}}}
    (\density, \entropy, \fgrad, \tensorq{Y}, \detY)
    =
    \\
    \cheatvolsol \tempref 
    \left(
      \frac{\density}{\densityref}
      \frac{1}{1 - b \density}
    \right)^{\gamma - 1}
    \exp 
    \left(
      \frac{\entropy}{\cheatvolsol} + \frac{\alpha K_1}{2 \cheatvolref} \tr \left( \fgrad \tensorq{Y}^{-\frac{1}{2}} \transpose{\fgrad} \right)
    \right)
    \\
    \times
    \left( \frac{\rhor}{\density} \detY \right)^{-\frac{\alpha \kboltzmann}{\cheatvolsol}}
    +
    \left( \frac{1}{\density} - b \right) \pressure_{\infty}
    +
    q.
  \end{multline}  
  Note that the last two terms on the right-hand side of \eqref{eq:nasg-ienergy-tilde} are convex with respect to $(\inverse{\density}, \entropy, \fgrad, \tensorq{Y}, \detY)$. Hence, it suffices to prove (strict) convexity of the remaining term.
  
  First, we show that $\widetilde{\ienergy_{\mathrm{s}}}$ is strictly convex with respect to $(\inverse{\density}, \entropy, \detY)$. Let us define an auxiliary function
  \begin{equation}
    \label{eq:nasg-f}
    f(x, y, z)
    \defeq
    \frac{\exponential{q y}}{\left( x - b \right)^p x^r z^r},
  \end{equation}
  where $(x, y, z) \in D_{f} \defeq \{ (x, y, z) \in \left( b, +\infty \right) \times \R \times \left( 0, +\infty \right) \}$ and $p, q, r > 0$. To prove that the first term on the right-hand side of \eqref{eq:nasg-ienergy-tilde} is strictly convex with respect to $(\inverse{\density}, \entropy, \detY)$ for arbitrary values of the material parameters $\alpha$, $\cheatvolsol$, $\gamma$, $b$, $\tempref$, $\densityref$ it suffices to prove that the function $f$ is strictly convex with respect to $(x, y, z)$ for arbitrary values of the parameters $p, q, r$. Let $H_i$ denote the $i$-th leading principal minor of the Hessian matrix $\myvec{H}$ corresponding to the function $f$. An elementary yet tedious calculation yields
  \begin{subequations}
    \label{eq:nasg-determinants}
    \begin{align}
      \label{eq:nasg-h1}
      H_1 
      &= 
      \frac{\exponential{q y}}{(x-b)^{p+2} x^{r+2} z^r}
      \left[
        (p+r)(1+p+r)x^2 - 2br(1+p+r) + b^2 r(1+r)
      \right],
      \\
      \label{eq:nasg-h2}
      H_2 
      &= 
      \frac{q^2 \exponential{2 q y}}{(x-b)^{2p+2} x^{2r+2} z^{2r}}
      \left[
        (p+r)x^2 - 2 b r x + b^2 r
      \right],
      \\
      \label{eq:nasg-h3}
      H_3 
      &= 
      \frac{q^2 r \exponential{3 q y}}{(x - b)^{3p+2} x^{3r + 2} z^{3r + 2}}
      \left[
        (p+r)x^2 - 2 b r x + b^2 r
      \right].
    \end{align}
  \end{subequations}
  By a direct computation one can verify that the discriminants of the quadratic functions appearing in \eqref{eq:nasg-h1} and \eqref{eq:nasg-h2} read $-4b^2pr(1+p+r)$ and $-4b^2pr$, respectively. Hence, by Sylvester's criterion, the Hessian matrix $\myvec{H}$ is positive definite on $D_f$. Consequently, the function $f$ is strictly convex on $D_{f}$.
    
  Second, we prove that $\widetilde{\ienergy_{\mathrm{s}}}$ is convex with respect to $(\inverse{\density}, \entropy, \fgrad, \tensorq{Y}, \detY)$. Note that the first term on the right-hand side of \eqref{eq:nasg-ienergy-tilde} can be conveniently written 
  \begin{equation}
    \label{eq:nasg-ienergy-tilde-2}
    C
    \exp
    \left(
      -
      p \log \left( \inverse{\density} - b \right)
      -
      r \log \left( \inverse{\density} \right)
      +
      q \entropy
      -
      r \log \left( \detY \right)
      +
      s \tr \left( \fgrad \tensorq{Y}^{-\frac{1}{2}} \transpose{\fgrad} \right)
    \right),      
  \end{equation}
  where $C, p, q, r, s > 0$ are positive constants whose explicit formulae can be found easily. Similarly as in the proof of Theorem \ref{th:pg}, by showing that the argument of the exponential function in \eqref{eq:nasg-ienergy-tilde-2} is convex and by exploiting Lemma \ref{lem:1} we finally conclude that $\widetilde{\ienergy_{\mathrm{s}}}$ is convex with respect to $(\inverse{\density}, \entropy, \fgrad, \tensorq{Y}, \detY)$.
\end{proof}

\section{Conclusion}
\label{sec:conclusion}

We have pursued here our extension of the elastodynamics system of conservation laws 
for compressible viscoelastic fluids with fading memory using relaxing structural tensors 
\cite{boyaval:viscoelastic}
to \emph{non-isothermal} flows.

We have also considered generalized (Hookean--elastic) Maxwell fluids using different measures of strain.
This is physically justified (for application to polymer \emph{melts} and other rubbery materials beyond polymer solutions e.g.)
and mathematically interesting insofar as it ensures smooth solutions that are fully consistent with their interpretation
(through a Lagrangian description equivalent to our Eulerian model).

Our model asymptotically contains 
numerous known models 
in the isothermal incompressible limits.
It offers a sound mathematical framework to answer physical issues raised by usual models
e.g. \cite{mackay.at.phillips.tn:on,bollada.pc.phillips.tn:on},
insofar as univocal (smooth) solutions can be defined (on short times).
%

One question to be addressed in the future is the class of Helmholtz free-energy formulae, functions of various strain measures,
that can be considered. 
For many real materials, adding 
terms (volumetric or Hookean--elastic, possibly with finite-extensibility effects, but functions
of only \emph{one} strain measure among various possible, plus of \emph{independent} structural parameters with different time scales)
might suffice to cover useful applications. 


A second---related---question is how to improve the description of thermal effects,
especially on large temperature ranges throughout ``phase transitions''.
On large temperature ranges, one has to cover various time scales,
so a Helmholtz free-energy sum of many terms,
using many structural tensors with many different relaxation time scales, is certainly one option to be considered. 
To correctly match thermal effects then, letting those relaxation times $\dragcoef$ depend on $\temp$ might suffice,
e.g. to model glass-forming polymers.
Precise mathematical formulations seem desirable for sharp numerical predictions.

\begin{acknowledgements}
S\'ebastien Boyaval has been supported by ANR JCJC SEDIFLO Project-ANR-15-CE01-0013.

Mark Dostal\'ik has been supported by the Czech Science Foundation, Grant Number 20-11027X. Additional funding was provided by institutional grants Charles University Grant Agency, Grant Number 1652119, and by Charles University Research Programme No. UNCE/SCI/023.
\end{acknowledgements}


\bibliographystyle{spmpsci}      
\bibliography{bibliography}

\newpage
\appendix


\end{document}